\newcommand\indlim\varinjlim
\newcommand{\wt}{\widetilde}
\newcommand\cA{\mathcal{A}}
\newcommand\cM{\mathcal{M}}
\newcommand\cO{\mathcal{O}}
\newcommand\cP{\mathcal{P}}
\newcommand\cV{\mathcal{V}}
\let\fg\undefined
\newcommand\fc{\mathfrak{c}}
\newcommand\fg{\mathfrak{g}}
\newcommand\ft{\mathfrak{t}}
\newcommand\fs{\mathfrak{s}}
\newcommand\fD{\mathfrak{D}}
\newcommand\fS{\mathfrak{S}}
\newcommand\bbA{\mathbb{A}}
\newcommand\GG{\mathbb{G}}
\newcommand\bA{{\bbA}}
\newcommand\bG{{\GG}}
\newcommand\A{\mathbf{A}}
\def\e{\mathrm{e}}
\newcommand\gl{\mathfrak{gl}}
\newcommand\GL{\mathrm{GL}}
\newcommand\SO{\mathrm{SO}}
\newcommand\SL{\mathrm{SL}}
\newcommand\Sp{\mathrm{Sp}}
\newcommand\tr{\mathrm{tr}}
\newcommand\Spec{\mathrm{Spec}}
\newcommand\Gm{\mathbb{G}_m}
\newcommand\Hom{\mathrm{Hom}}
\newcommand\Sym{\mathrm{Sym}}
\newcommand\ad{\mathrm{ad}}
\newcommand\sgn{\mathrm{sgn}}
\newcommand{\Fr}{\mathrm{Fr}}
\newcommand\git{{/\!\!/}}
\newcommandx{\unsure}[2][1=]{\todo[linecolor=red,backgroundcolor=red!25,bordercolor=red,#1]{#2}}
\newcommandx{\change}[2][1=]{\todo[linecolor=blue,backgroundcolor=blue!25,bordercolor=blue,#1]{#2}}
\newcommandx{\info}[2][1=]{\todo[linecolor=OliveGreen,backgroundcolor=OliveGreen!25,bordercolor=OliveGreen,#1]{#2}}
\newcommandx{\improvement}[2][1=]{\todo[linecolor=Plum,backgroundcolor=Plum!25,bordercolor=Plum,#1]{#2}}
\newcommandx{\thiswillnotshow}[2][1=]{\todo[disable,#1]{#2}}
\newtheorem{theorem}{Theorem}
\newtheorem{lemma}[theorem]{Lemma}
\newtheorem{proposition}[theorem]{Proposition}
\newtheorem{definition}[theorem]{Definition}
\numberwithin{theorem}{section}
\title{The companion section for classical groups}
\author{T. Hameister, B. C. Ng\^o}
\date{}
\begin{document}


\maketitle

\begin{abstract}
	We use the companion matrix construction for $\GL_n$ to build canonical sections of the Chevalley map $[\fg/G]\to \fg\git G$ for classical groups $G$ as well as the group $G_2$. To do so, we construct canonical tensors on the associated spectral covers. As an application, we make explicit lattice descriptions of affine Springer fibers and Hitchin fibers for classical groups and $G_2$. 
\end{abstract}

\section{Introduction}

Let $G$ be a reductive group over $k$, and denote by $\fg$ its Lie algebra. The Chevalley map
\[
\chi\colon \fg\to \fg\git G,
\]
where $\fg\git G := \Spec (k[\fg]^G)$ denotes the invariant theoretic quotient of $\fg$ by the adjoint action of $G$, is of fundamental importance in the construction of the Hitchin system \cite{hitchin}. In particular, for $\fg = \mathfrak{gl}_n$, $\chi$ sends a matrix to its characteristic polynomial. 

In  \cite{kostant}, Kostant exhibited a section of the Chevalley map for a general reductive group $G$ under the assumption that the characteristic of $k$ does not divide the order of the Weyl group. Kostant's section was generalized in \cite{bouthier} and \cite{afv}, including the case of characteristics $p>2$ for classical groups and the group $G_2$. As explained in \cite{ngo}, this section can be used to construct sections of the Hitchin fibration and affine Springer fibers. However, Kostant's construction can be counter-intuitive for computations. To illustrate this latter point, consider the case $G = \GL_3(k)$, in which case $\fg\git G=\bA^3$ is the 3-dimensional affine space. The Kostant section is the map sending 
$$
(a_1,a_2,a_3)\in \fg\git G \quad \mapsto \quad \begin{pmatrix}
	\frac{a_1}{3} & \frac{a_1^2}{6}+\frac{a_2}{2} & -\frac{4a_1^3}{27}-\frac{a_1a_2}{3}-a_3 \\
	1 & \frac{a_1}{3} & \frac{a_1^2}{6}+\frac{a_2}{2} \\
	0 & 1 & \frac{a_1}{3}
\end{pmatrix}\in \fg
$$
If you introduced this problem to an undergraduate student of linear algebra, of course, they would not give you the answer above; they might instead suggest the map:
$$
(a_1,a_2,a_3)\in \fg\git G \quad \mapsto \quad \begin{pmatrix}
	0 & 0 & -a_3 \\
	1 & 0 & -a_2 \\
	0 & 1 & -a_1
\end{pmatrix}\in \fg
$$
sending a characteristic polynomial to its \emph{companion matrix}. The section to the Hitchin map that Hitchin constructed in \cite{hitchin} is not strictly the same as the one of \cite{ngo} in the sense that he does not rely on the Kostant section but another section that feels more like a generalization of the companion matrix. Instead of the companion matrix, a map $\fg\git G \to \fg$, we will construct a map $\fg\git G\to [\fg/G]$, where $[\fg/G]$ is the quotient of $\fg$ by the adjoint action of $G$ in the sense of algebraic stack. This section will be called the companion section, which is free of any choice. The present note aims to explicitly construct the companion section for classical groups, including the symplectic and orthogonal groups and $G_2$. As an application of the companion sections, we will give elementary descriptions of affine Springer fibers and Hitchin fibers for classical groups similar to the description of the Hitchin fibers in the linear case due to Beauville-Narasimhan-Ramanan. 

The emphasis of this work is on providing case-by-case explicit formulas for the companion section for classical groups. It is also possible to construct the companion section uniformly. This will be the subject of our subsequent work.

\section*{Acknowledgments}

The second author was partially supported by NSF grant DMS 2201314. The authors thank Alexis Bouthier for pointing out the literature on Kostant sections in small characteristics. Both authors thank the anonymous reviewer for helpful feedback.

\section{Tensors defining classical groups}

We will recall the standard definition of classical groups as the subgroup of the linear groups fixing certain tensors. This is very well known for symplectic and orthogonal groups but a bit less known for $G_2$, which in a certain respect could qualify as a classical group as well. 

Let $V$ be a $2n$-dimensional vector space over a base field $k$, $V^*$ its dual vector space. The linear group $\GL(V)$ acts on the space $\wedge^2 V^*$ of alternating bilinear forms on $V$ with an open orbit. An alternating bilinear form $\mu\in \wedge^2 V^*$ is considered non-degenerate if it lies in this open orbit. This is equivalent to requiring the induced map $\mu: V\to V^*$ to be an isomorphism. The stabilizer of such a non-degenerate alternating bilinear form is a symplectic group $G$. We note that $\mu \in \wedge^2 V^*$ is non-degenerate if $\wedge^n \mu \in \wedge^{2n} V^*$ is a non-zero vector of the 1-dimensional vector space $\wedge^{2n} V^*$ and as a result, $G$ is contained in the special linear group $\SL(V)$. Then, a $G$-bundle over a $k$-scheme $S$ consists of a locally free $\cO_S$-module $\cV$ of rank $2n$ equipped with an alternating bilinear form $\wedge^2_S \cV \to \cO_S$ which is non-degenerate fiberwise. Although the embedding of $G = \Sp_{2n}$ into $\GL_{2n}$ may differ by conjugation by an element of $\GL_{2n}$, as we are more concerned with $G$-bundles than $G$ itself, the specific choice of non-degenerate alternating form $\mu\in \wedge^2 V^*$ is immaterial. We will write $G=\Sp_{2n}$. 

Let $V$ be a $n$-dimensional vector space over a base field $k$, $V^*$ its dual vector space. The linear group $\GL(V)$ acts on the space $S^2 V^*$ of symmetric bilinear forms on $V$ with an open orbit. A symmetric bilinear form $\mu\in S^2 V^*$ is considered non-degenerate if it lies in this open orbit. This is also equivalent to the induced map $\mu: V\to  V^*$ being an isomorphism, which in turn is equivalent to the induced map $\wedge^n \mu:\wedge^n V\to \wedge^n V^*$ being an isomorphism of 1-dimensional vector spaces. We note that $\wedge^n V$ and $\wedge^n V^*$ are dual as vector spaces so that for every choice of a basis vector $\omega\in \wedge^n V$, we have a dual basis vector $\omega^* \in \wedge^n V^*$. A basis vector $\omega\in \wedge^n V$ is said to be compatible with $\mu$ if the equation $\wedge^n \mu (\omega)=\omega^*$ is satisfied. This equation has exactly two non-zero solutions $\omega\in \wedge^n V$, which differ by a sign. The stabilizer of a non-degenerate symmetric bilinear form $\mu \in S^2 V^*$ is an orthogonal group $O(\mu)$. The stabilizer of a pair $(\mu,\omega)$ consisting of a non-degenerate symmetric bilinear form $\mu \in S^2 V^*$ and a compatible basis vector $\omega\in \wedge^n V$ is the special orthogonal group $\SO(\mu,\omega)$ which is the neutral component of $O(\mu)$. We note that $\SO(\mu,\omega)= O(\mu)\cap \SL(V)$ so that the special orthogonal group can also be defined as the stabilizer of a pair $(\mu,\omega)$ as above but without requiring $\omega$ being compatible with $\mu$. The stabilizer of any such pair is a special orthogonal group $G$. A $G$-bundle over a $k$-scheme $S$ consists then in a locally free $\cO_S$-module $\cV$ of rank $n$ equipped a symmetric bilinear form $\wedge^2_S \cV \to \cO_S$ which is non-degenerate fiberwise. The embedding of $G = \SO_{n}$ into $\GL_n$ depends on the form $\mu$ and is well defined only up to conjugation by $\GL_n$. However, as we are more concerned with $G$-bundles than $G$ itself, choosing a specific non-degenerate symmetric form $\mu\in \wedge^2 V^*$ is immaterial. We will write $G=\SO_n$.

There is a simple tensor definition of $G_2$ due to Engel \cite{engel}. Let $V$ be a 7-dimensional vector space. The linear group $\GL(V)$ acts on the space $\wedge^3 V^*$ of non-degenerate trilinear forms on $V$ with an open orbit. We will follow Hitchin's \cite{hitchin3forms} in formulating the equation defining this open orbit . We will denote the contraction $ \wedge^3 V^* \times V \to \wedge^2 V^*$ by $\mu_v$ for $\mu\in \wedge^2 V^*$ and $v\in V$. For $v_1,v_2\in V$ and $\mu\in \wedge^3 V^*$, we then have
$$\mu_{v_1} \wedge \mu_{v_2} \wedge \mu \in \wedge^7 V^*.$$
By choosing a non-zero vector $\iota$ of the determinant $\wedge^7 V$, $\mu$ gives rise to a symmetric bilinear form $\nu\in S^2 V^*$
\begin{equation}
	\label{eqn: associated symmetric form}
	\nu(v_1,v_2)=\langle \iota,  \mu_{v_1} \wedge \mu_{v_2} \wedge \mu \rangle
\end{equation}
which is non-degenerate if and only if $\mu$ lies in the open orbit of $\wedge^3 V^*$. We will say that $\mu$ is a non-degenerate 3-form on $V$. The stabilizer of a non-degenerate 3-form is a group $G_2\ltimes \mu_3(k)$ where $\mu_3(k)$ is the group of 3rd roots of unity in $k$; We obtain the connected component, a group of type $G_2$, by taking the intersection with $SL(V)$. A $G_2$-bundle over a $k$-scheme $S$ is thus a locally free $\cO_S$-module $\cV$ of rank 7 equipped with an alternating trilinear form $\mu \in \wedge^3 \cV^*$ which is non-degenerate fiberwise together with a trivialization of the determinant. Again, a different choice of nondegenerate 3-form $\mu$ may give a $\GL_7$ conjugate embedding of $G_2$ into $\GL_7$. However, such a choice is immaterial for us.

\section{Spectral cover and the companion matrix}

For all groups $G$ discussed previously, including symplectic, special orthogonal, and $G_2$, $G$ is defined as a subgroup of $\GL_n$ fixing certain tensors. We call the inclusion $G\to \GL_n$ the standard representation of $G$. We also have the induced inclusion of Lie algebras $\fg\to \mathfrak{gl}_n$ compatible with the adjoint actions of $G$ and $\GL_n$. We derive a morphism between invariant theoretic quotients 
$$\fc=\fg\git G \to \gl_n \git \GL_n=\fc_n$$
which is a closed embedding for symplectic groups, odd special orthogonal groups, and $G_2$, but not for even orthogonal groups. For $\GL_n$, we have a spectral cover $\fs_n \to \fc_n$, defined in Section \ref{sec: GLn Case}, which is a finite flat morphism of degree $n$ so that $\cO_{\fs_n}$ is a locally free $\cO_{\fc_n}$-module of rank $n$ given with a canonical endomorphism $[x]$ which is the usual companion matrix. The main result of this work can be formulated as follows:

\begin{theorem}
	Let $G$ be a symplectic group, odd special orthogonal group, or $G_2$ group and $G\to \GL_n$ its standard representation. Let $\fc\to \fc_n$ be the induced map of Chevalley quotients which is a closed embedding in these cases. Then the restriction $\cO_{\fs_n}$ to $\fc$ 
	$$\cV=\cO_{\fc} \otimes_{\cO_{\fc_n}}\cO_{\fs_n}$$ 
	as locally free $\cO_{\fc}$-module affords a canonical tensor defining a $G$-reduction and the companion matrix for $\GL_n$ defines a canonical map $\fg\git G \to [\fg / G]$ which is a section of the natural map $[\fg/G] \to \fg \git G$. This statement  remains valid for even orthogonal groups after replacing $\fc\times_{\fc_n} \fs_n$ by its normalization.
\end{theorem}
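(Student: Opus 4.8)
The plan is to work case-by-case along the standard representations catalogued in Section 2, showing in each case that the spectral cover, restricted to $\fc = \fg\git G$, carries a tensor of the type defining $G$, and that this tensor is fiberwise non-degenerate. First I would recall the $\GL_n$ picture: over $\fs_n$ we have the locally free rank-$n$ module $\cO_{\fs_n}$ with its companion endomorphism $[x]$, and the key structural fact is that $\cO_{\fs_n}$ is (Zariski-locally on $\fc_n$) the free $\cO_{\fc_n}[x]$-module $\cO_{\fc_n}[x]/(P)$, where $P(x)=x^n - a_1 x^{n-1} + \cdots \pm a_n$ is the universal characteristic polynomial; here $[x]$ is multiplication by $x$. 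The whole argument hinges on producing, over the appropriate closed subscheme $\fc\into\fc_n$, a perfect pairing on $\cV = \cO_\fc\otimes_{\cO_{\fc_n}}\cO_{\fs_n}$ that is compatible with $[x]$ in the way dictated by the Lie algebra constraint (skew-symmetry of $[x]$ for $\Sp_{2n}$ and $\SO_n$, and the $G_2$ tensor identity for $G_2$).

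Next I would treat the symplectic and orthogonal cases together, as they are governed by the same mechanism. A matrix $A\in\gl_n$ lies in $\fsp_{2n}$ (resp. $\mathfrak{so}_n$) relative to a bilinear form $\mu$ iff $\mu(Av,w) + \mu(v,Aw)=0$; equivalently, the companion matrix $[x]$ is skew-adjoint for $\mu$. For the module $R[x]/(P)$ with $R=k[\fc]$, I would construct $\mu$ explicitly via the \emph{residue pairing}: $\mu(f,g) = \Res_{x}\bigl(f(x)g(x)\,\phi(x)\,dx/P(x)\bigr)$ for a suitable auxiliary polynomial $\phi$, or equivalently via the coefficient of $x^{n-1}$ in $f(x)g(x)\,\phi(x) \bmod P(x)$. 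Multiplication by $x$ is self-adjoint for every such residue pairing; to get skew-adjointness one must instead pair $f$ against $g$ after applying the involution $x\mapsto -x$ on the cover — which is exactly the locus in $\fc_n$ cut out by $\fc$, since for $\Sp_{2n}$ and odd $\SO_n$ the image of $\fc$ in $\fc_n$ consists of characteristic polynomials invariant (up to the parity twist) under $a_i \mapsto (-1)^i a_i$. I would then check that the pairing is perfect precisely over the locus where the relevant discriminant does not vanish, and extend by flatness/normality to all of $\fc$; the compatible volume form $\omega$ needed for $\SO$ comes from the canonical generator $1\wedge x\wedge\cdots\wedge x^{n-1}$ of $\wedge^n\cV$. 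For $G_2$ I would use the $7$-dimensional standard representation and Engel's trilinear form, building the $3$-form on $\cV$ from the quadratic residue pairing together with the cyclic structure of $R[x]/(P)$, and verifying Hitchin's non-degeneracy equation \eqref{eqn: associated symmetric form} on the regular locus.

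The main obstacle, and where I would spend the most care, is the even orthogonal case: there $\fc\to\fc_n$ is \emph{not} a closed embedding, because the map $\fso_{2m}\git\SO_{2m}\to\fc_{2m}$ has degree $2$ onto its image (the Pfaffian is not a polynomial in the characteristic polynomial coefficients). So $\fc\times_{\fc_n}\fs_n$ need not be reduced or normal, and the naive pullback of $\cO_{\fs_n}$ may fail to carry a well-defined non-degenerate form. The fix stated in the theorem is to pass to the normalization of $\fc\times_{\fc_n}\fs_n$; concretely this adjoins a square root of $\Res$-type quantity realizing the Pfaffian, after which the residue-pairing construction goes through as before. I would verify that the normalization is finite flat of rank $2m$ over $\fc$, that the companion endomorphism lifts, and that the resulting pairing is perfect — the subtlety being to identify the normalization explicitly and to see that the $\SO$ volume form is available after normalization. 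Finally, in every case, once $\cV$ with its tensor is constructed, the induced classifying map $\fc\to[\fg/G]$ is a section of $[\fg/G]\to\fc$ essentially by construction, since the underlying $\GL_n$-data is the companion matrix, whose characteristic polynomial is the tautological one; I would close by noting this compatibility is a formal consequence of the $\GL_n$ case recalled at the start.
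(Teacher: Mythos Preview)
Your outline for the symplectic and orthogonal cases is essentially the paper's own approach: the form $\beta^*$ given by ``coefficient of $x^{n-1}$'' is exactly the generator of $B_n^*$ singled out in Lemma~\ref{lem:beta}, and twisting by the involution $\tau(x)=-x$ is precisely how the paper produces $\omega(b_1,b_2)=\beta^*(b_1,\tau(b_2))$. Two points deserve tightening. First, the symmetry type of $\omega$ (alternating for $\Sp_{2n}$, symmetric for $\SO_{2n+1}$) is not automatic from applying $\tau$; it comes from the parity of $f'$ under $\tau$, which differs between the two cases because $f$ is even for $\Sp_{2n}$ but $x$ times an even polynomial for $\SO_{2n+1}$. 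You should make this explicit. Second, your phrasing ``perfect precisely over the locus where the discriminant does not vanish, and extend by flatness/normality'' is backwards: non-degeneracy is not the sort of property one extends by Hartogs. The point of Lemma~\ref{lem:beta} is that $\beta^*$ generates $B_n^*$ as a $B_n$-module \emph{everywhere}, so the pairing is perfect on all of $\fc$ from the start; it is the trace form $\mu=f'\beta^*$ that degenerates on the discriminant.

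The genuine gap is $G_2$. Saying you will build the $3$-form ``from the quadratic residue pairing together with the cyclic structure of $R[x]/(P)$'' is not a construction, and nothing in the $\GL_7$ companion module alone suggests a canonical alternating $3$-form. The paper's argument really requires new input here: either the explicit formula~\eqref{eqn:G2form}, whose non-degeneracy and compatibility with $[x]$ are verified by computer (Propositions~\ref{prop:G2form is nondegenerate} and~\ref{prop:G2form is compatible with x}), or the more conceptual route of \S\ref{sec: G2 case theoretical}, which exploits the two intermediate subcovers $A'\subset B'$ and $A''\subset B'$ of degrees $2$ and $3$ to produce special forms $\omega_{A'}\in\wedge^3(B')^*$ and $\omega_{A''}\in\wedge^2(B')^*$ supported on distinguished components of the symmetric powers of the spectral cover, and then glues them along the exact sequence relating $B$ to $A\oplus B'$. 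Your proposal contains no analogue of either mechanism, and without one the $G_2$ case does not go through.
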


We prove the theorem by a case-by-case analysis. In particular, we will construct the explicit tensors required in each case.

\subsection{Linear groups}
\label{sec: GLn Case}
We first recall how the companion matrix is connected to the universal spectral cover in the case $\GL_n$.  In this case, the Chevalley quotient $\fg\git G$ is the $n$-dimensional affine space $\A^n$ and the map $\chi:\fg \to \fg\git G$ is given by the characteristic polynomial $\chi(\gamma)=(a_1(\gamma),\ldots,a_n(\gamma))$ where $\gamma\in \fg$ and $a_i(\gamma)=(-1)^i \tr(\wedge^i \gamma)$.    
In this case we have $\fc_n=\Spec(A_n)$ where $A_n=k[a_1,\ldots,a_n]$. The spectral cover $\mathfrak{s}_n=\Spec(B_n)$ where $B_n$ is the $A_n$-algebra 
\[ B_n=A_n[x]/(x^n+a_1 x^{n-1} +\cdots +a_n)\]
which is a free $A_n$-module of rank $n$ as the images of $1,x,\ldots,x^{n-1}$ form an $A_n$-basis of $B_n$. 
We also note that $B_n$ is a regular $k$-algebra as it is isomorphic to the polynomial algebra of variables $a_1,\dots, a_{n-1},x$. On the other hand, $B_n$ is equipped with an $A_n$-linear operator $[x]:B_n\to B_n$ given by $b\mapsto bx$. To give a map $\fc_n \to [\gl_n/\GL_n]$ is equivalent to the data of a rank $n$ vector bundle $\mathcal{E}\to \fc_n$ together with a an $\cO_{\fc_n}$-linear endomorphism of $\mathcal{E}$; that is, at the level of modules, a free, rank $n$ $A_n$ module with an $A_n$ linear endomorphism. Hence, $B_n$ with the operator $[x]$ provides us with an $A_n$-point of $[\gl_n/\GL_n]$, and we have thus constructed a map $[x]:\fc_n \to [\gl_n/\GL_n]$ which is a section of $\chi:[\gl_n/\GL_n] \to \fc_n$.  In term of matrices with respect to the $A_n$-basis of $B_n$ given by $1,x,\ldots,x^{n-1}$, $[x]$ is given by the usual companion matrix
\begin{equation}\label{gamma}
	x_\bullet= \begin{pmatrix}
		0 &  & \cdots & -a_n \\
		1 & 0 & \cdots & -a_{n-1} \\
		& \cdots & 0 & \\
		0 &  & 1 & -a_1
	\end{pmatrix}\in \mathfrak{gl}_n(A)
\end{equation}
The companion matrix thus gives us a map $x_\bullet:\fc\to \fg$ in the case $G=\GL_n$ taking a point $a = (a_1,\dots, a_n)$ of $\fc$ to the matrix above. This construction is a section to the characteristic polynomial map. However, it is often more useful to think of $[x]$ as a map $[x]:\fc \to [\fg/G]$ in the case $G=\GL_n$.

Let $\fg$ come equipped with the homothety action of $\bG_m$ and $\fc$ with the induced action $t\cdot a_i = t^ia_i$. There is an issue with using the companion matrix to construct a section to the Hitchin map as the companion map $x_\bullet:\fc\to \fg$ is not $\Gm$-equivariant. We note, however, that the stack-valued map $[x]:\fc\to [\fg/G]$ is almost $\Gm$-equivariant in the sense that after a base change by the isogeny $\Gm\to \Gm$ given by $t\mapsto t^2$, it becomes equivariant because of the identity
\begin{equation} \label{conjugation}
	\mathrm{ad}(\mathrm{diag}(t^{n-1},t^{n-3},\ldots,t^{1-n}))( \gamma)
	= t^{-2} \begin{pmatrix}
		0 & 0 & \cdots & -t^{2n} a_n \\
		1 & 0 & \cdots & -t^{2n-2} a_{n-1} \\
		& \cdots & & \\
		0 &  & 1 & -t^2 a_1
	\end{pmatrix}. \end{equation} 
This explains why we have a section to the Hitchin map after choosing a square root of the canonical bundle as in \cite{hitchin}.

As we intend to use the companion matrix \eqref{gamma} to construct a canonical section to the Chevalley map $\chi:[\fg/G]\to \fc$ for classical groups, it is useful to further investigate the linear algebraic structure of $B_n$ as an $A_n$-module. We have a symmetric $A_n$-bilinear map $\xi:B_n\otimes_{A_n} B_n\to A_n$ given by $$\xi(b_1\otimes_{A_n} b_2)=\tr_{B_n/A_n}(b_1b_2)$$ 
thus an element $\xi \in S^2_{A_n} B_n^*$. Because this element induces degenerate forms over the ramification locus of $B_n$ over $A_n$, we need a correction term to get a symmetric bilinear form that is non-degenerate fiberwise. We will describe this correction and the associated nondegenerate form in Lemma \ref{lem:beta}.

The pairing $\xi$ defines an $A_n$-linear map $\mu:B_n\to B_n^*$ where $B_n^*=\Hom_{A_n}(B_n,A_n)$ and $\mu(b_1)(b_2) = \xi(b_1,b_2)$. We note that the $A_n$-module $B_n^*$ is naturally a $B_n$-module and $\mu: B_n\to B_n^*$ is $B_n$ linear; thus, it is uniquely determined by the image of $1\in B_n$ that we will also denote by $\mu \in B_n^*$.  We will show that $B_n^*$ is a free $B_n$-module of rank 1, construct a generator of $B_n^*$ and find an explicit formula for $\mu\in B_n^*$ as a multiple of this generator.

\begin{lemma}
	\label{lem:beta}
	Let us denote by $v_0,\ldots,v_{n-1}$ the basis of $B_n$ given by the images of $1,x,\ldots,x^{n-1}$ in $B_n$ and $v_0^*,\ldots,v_{n-1}^*$ the dual basis of $B_n^*$. Then $\beta^*=v_{n-1}^*$ is a generator of $B_n^*$ as a $B_n$-module. Let us denote $f'\in B_n=A_n[x]/(f)$ the image of the derivative \[nx^{n-1}+(n-1)a_1x^{n-2}+\cdots+a_{n-1}\in A_n[x]\] 
	of the universal polynomial $f=x^n +a_1 x^{n-1}+\cdots+a_n \in A_n[x]$. Then we have $\mu= f' \beta^*$. 
\end{lemma}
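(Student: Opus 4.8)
The plan is to carry out everything inside the ring $B_n=A_n[x]/(f)$, using that the companion matrix \eqref{gamma} is nothing but the matrix of multiplication by $x$ on $B_n$ in the basis $v_0,\dots,v_{n-1}$, and to transport this action to the dual $A_n$-module $B_n^*=\Hom_{A_n}(B_n,A_n)$, which is a $B_n$-module via $(b\cdot\varphi)(b')=\varphi(bb')$. The whole statement will drop out once we understand this $B_n$-action on $B_n^*$ in the dual basis $v_0^*,\dots,v_{n-1}^*$.

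First I would make the $B_n$-module structure of $B_n^*$ explicit. Since $xv_i=v_{i+1}$ for $i<n-1$ and $xv_{n-1}=x^n=-\sum_{m=1}^na_mv_{n-m}$ in $B_n$, dualizing gives the uniform relation
\[
x\cdot v_j^*=v_{j-1}^*-a_{n-j}\,\beta^*\qquad(0\le j\le n-1),
\]
with the convention $v_{-1}^*:=0$ and $\beta^*=v_{n-1}^*$. Solving this recursion downward from $j=n-1$ yields, by a one-line induction, $v_j^*=c_j(x)\,\beta^*$ where $c_j(x)=x^{n-1-j}+a_1x^{n-2-j}+\dots+a_{n-1-j}\in B_n$; the induction step is the identity $x\,c_j(x)+a_{n-j}=c_{j-1}(x)$ in $A_n[x]$, which degenerates at $j=0$ into $x\,c_0(x)\equiv-a_n\pmod f$. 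In particular every $v_j^*$ lies in $B_n\cdot\beta^*$, so $\beta^*$ generates $B_n^*$ over $B_n$; and the resulting $B_n$-linear surjection $B_n\to B_n^*$, $b\mapsto b\,\beta^*$, is a surjection of free $A_n$-modules of the same rank $n$, hence an isomorphism (a surjection $A_n^n\twoheadrightarrow A_n^n$ admits a right inverse, so has unit determinant). Thus $B_n^*$ is free of rank $1$ over $B_n$ with basis $\beta^*$, which is the first assertion.

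For the formula $\mu=f'\beta^*$ I would expand $\mu=\mu(1)$ in the dual basis: $\mu(1)(v_j)=\tr_{B_n/A_n}(v_j)=\tr_{B_n/A_n}(x^j)=:p_j$, so $\mu=\sum_{j=0}^{n-1}p_j\,v_j^*=\bigl(\sum_{j=0}^{n-1}p_j\,c_j(x)\bigr)\beta^*$ by the previous step, and it remains to prove $\sum_{j=0}^{n-1}p_j\,c_j(x)=f'$ in $B_n$. For this I would use the generating‑function identity $\sum_{j=0}^{n-1}c_j(x)\,y^j=\frac{f(x)-f(y)}{x-y}$ in $A_n[x,y]$ — which is exactly how the $c_j$ arise — and reduce to the split case: since $B_n$ is free over $A_n$ and $A_n$ embeds into $R:=k[\alpha_1,\dots,\alpha_n]$ in such a way that $f=\prod_{i=1}^n(x-\alpha_i)$ and $p_j=\sum_i\alpha_i^j$, it suffices to verify the identity in $R[x]/(f)$. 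There, substituting $y=\alpha_i$ (legal since $f(\alpha_i)=0$ makes $\frac{f(x)}{x-\alpha_i}$ a polynomial) and summing over $i$ gives $\sum_jp_j\,c_j(x)=\sum_i\frac{f(x)}{x-\alpha_i}=f'(x)$, the last equality being the logarithmic derivative of $f=\prod_i(x-\alpha_i)$. Hence $\mu=f'\beta^*$. The one step not forced by bookkeeping is this last identity $\sum_jp_j\,c_j=f'$; one can also see it without the splitting ring, by noting that $(f'\beta^*)(v_j)$ is the coefficient of $x^{n-1}$ in $x^jf'(x)\bmod f$, which equals the power sum $p_j$ (equivalently, the classical expansion $f'/f=\sum_{j\ge0}p_j\,x^{-j-1}$), but that comes down to the same computation.
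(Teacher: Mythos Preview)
Your proof is correct and takes a genuinely different route from the paper. The paper localizes $A_n$ at the discriminant so that $f'$ becomes invertible, then invokes the classical Euler formula $\tr_{B'/A'}(x^k/f')=\delta_{k,n-1}$ (cited from Serre) to get $\mu(v_0)=f'v_{n-1}^*$ in the localization, and finally uses injectivity of the localization map to bring this identity back to $B_n^*$; the fact that $\beta^*$ generates is read off from the unitriangular shape of the matrix $\mu(v_i)=f'(v^*_{n-1-i}+\text{lower})$.

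You instead work entirely over $A_n$: you compute the $B_n$-action on $B_n^*$ directly via the recursion $x\cdot v_j^*=v_{j-1}^*-a_{n-j}\beta^*$ and solve it to obtain the closed formulas $v_j^*=c_j(x)\beta^*$ with $c_j(x)=x^{n-1-j}+a_1x^{n-2-j}+\cdots+a_{n-1-j}$; the generator claim is then immediate. For $\mu=f'\beta^*$ you reduce to the polynomial identity $\sum_{j=0}^{n-1}p_j\,c_j(x)=f'(x)$ and prove it via the generating function $\sum_j c_j(x)y^j=\tfrac{f(x)-f(y)}{x-y}$ together with passage to the splitting ring $R=k[\alpha_1,\ldots,\alpha_n]$, where it becomes the logarithmic-derivative identity $\sum_i f(x)/(x-\alpha_i)=f'(x)$. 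The paper's approach is terser and leverages a citation; yours is self-contained, avoids any localization, and as a bonus yields the explicit description of $\beta^*$ as ``take the coefficient of $x^{n-1}$'', which makes the nondegeneracy of the pairing $\beta^*(b_1,b_2)$ visible without further argument.
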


\begin{proof}
	First, the discriminant $d$ of the universal polynomial $f$, defined as the resultant between $f$ and its derivative, is a nonzero element of the polynomial ring $A_n$. Indeed, $d$ defines the ramification divisor of the finite flat covering $\fs\to \fc$, which is generically étale for there exist separable polynomials in $k'[x]$ of degree $n$ with coefficients in any infinite field $k'$ containing $k$.
	We denote $A'=A_n[d^{-1}]$ the localization of $A_n$ obtained by inverting $d$, and $B'=B_n\otimes_{A_n} A'$. By construction, $f'$ is an invertible element of $B'$. The trace map $\mathrm{tr}_{B'/A'}:B'\to A'$ of $B'$ as free $A'$-module of rank $n$ is now given by the Euler formula (cf. III.6, Lemma 2 in \cite{serre})
	\[
	\mathrm{tr}_{B'/A'}\left(\frac{x^k}{f'}\right) = \begin{cases}
		0 & \text{if }k<n-1\\
		1 & \text{if }k = n-1
	\end{cases}
	\]
	If $v_0,\ldots,v_{n-1}$ denote the basis of $B'$ given by the images of $1,x,\ldots,x^{n-1}$ in $B_n$ and $v_0^*,\ldots,v_{n-1}^*$ the dual basis of $(B')^*$, then we derive from the Euler formula that the identities
	\begin{equation} \label{Euler}
		\mu(v_i)= f' \left (v^*_{n-1-i}+ \sum_{j<i} a'_{i,j} v^*_{n-1-j} \right )
	\end{equation}
	hold in $B_n^*\otimes_{A_n} A'$ for some $a'_{i,j}\in A'$. In particular, we have $\mu(v_0)=f' v^*_{n-1}$.
	As the localization map $B_n^* \to B_n^*\otimes_{A_n} A'$ is injective, this identity also holds in $B_n^*$. It follows that $\mu=f' v_{n-1}^*$ as desired. \end{proof}

As a consequence, we have a canonical nondegenerate bilinear form $\beta^*: B_n\otimes_{A_n} B_n\to A_n$ which is symmetric with respect to which the $A_n$-linear operator $[x]: B_n\to B_n$ is anti-self-adjoint; that is, for all $v_1,v_2\in B_n$, we have
\[
\beta^*(xv_1,v_2)+\beta^*(v_1,xv_2) = 0.
\]

For $G=\SL_n$, the Lie algebra $\fg=\mathfrak{sl}_n$ is the space of traceless matrices. We have $\fc=\Spec(A)$ where $A=k[a_2,\ldots,a_n]$. We note that for $a_1=0$, the companion matrix \eqref{gamma} is traceless and thus gives rise to a $A$-point on $\mathfrak{sl}_n$. The companion map $\gamma:\fc\to \fg$ induces a map $[\gamma]:\fc\to [\fg/G]$. The latter lays over the point of $B G$ with values in $A$ corresponding to the $\SL_n$-bundle corresponding to rank $n$ vector bundle $B$ equipped with the trivialization of the determinant given by the basis $1,x,\ldots, x^{n-1}$. The formula \eqref{conjugation} shows that the map $[\gamma]:\fc\to [\fg/G]$ is equivariant with respect to the isogeny $\Gm\to \Gm$ given by $t\mapsto t^2$ for the diagonal matrix $\mathrm{diag}(t^{n-1},t^{n-3},\ldots,t^{1-n})$ belonging to $\SL_n$.

\subsection{Symplectic groups}
\label{subsection:symplectic}

In the case $G=\Sp_{2n}$, we have $\fc=\Spec(A)$ with $A=k[a_2,\ldots,a_{2n}]$. The spectral cover $\mathfrak{s}=\Spec(B)$ where  \[B=A[x]/(x^{2n}+a_2 x^{2n-2} +\cdots +a_{2n})\] is a free $A$-module of rank $2n$, is equipped with an involution $\tau:B\to B$ given $\tau(x)=-x$. The companion matrix \eqref{gamma} gives a $A$-linear endomorphism of $B$ as a free $A$-module. For the companion matrix to produce a section to the Chevalley map $[\fg/G]\to \fc$ in the symplectic case, we need to construct a canonical nondegenerate symplectic form $\omega$ on the $A$-module $B$ for which $\gamma$ is anti-self-adjoint in the sense that
\[
\omega(\gamma v_1,v_2)+\omega(v_1,\gamma v_2) = 0
\]
for all $v_1,v_2\in B$.

The standard representation $\Sp_{2n}\to \GL_{2n}$ induces a map on Chevalley bases $\fc\to \fc_{2n}=\Spec(A_{2n})$ where $A_{2n}=k[a_1,\ldots,a_{2n}]$ which identidies $\fc$ with the closed subscheme of $\fc_{2n}$ defined by the ideal generated by $a_1,a_3,\ldots,a_{2n-1}$. We have $B=A\otimes_{A_{2n}} B_{2n}$ where $B_{2n}$ is the finite free $A_{2n}$-algebra defining the spectral covering of $\fc_{2n}$. If we denote $B^*=\Hom_A(B,A)$ then we have $B^*=A\otimes_{A_{2n}}B^*_{2n}$ where $B^*_{2n}=\Hom_{A_{2n}}(B_{2n},A_{2n})$. The generator $\beta_{2n}^*$ of the free $B_{2n}$-module $B^*_{2n}$ defined in Lemma \ref{lem:beta} then induces a generator $\beta^*$ of $B^*$ as a free $B$-module of rank one which can also be viewed as the bilinear form $\beta^*:B\otimes_A B \to A$ given by $b_1\otimes_A b_2=\tr_{B/A}({f'}^{-1} b_1 b_2)$ after localization.

The bilinear form $\omega: B\otimes_A B\to A$
$$\omega(b_1,b_2)= \beta^*(b_1,\tau(b_2)) = \tr_{B/A}({f'}^{-1} b_1 \tau(b_2))$$
with the second identity only making sense after localization of $A$ making $f'$ invertible, is a non-degenerate symplectic form for which $[x]$ is anti-self-adjoint. Indeed, we have 
\[\omega(b_1,b_2)=-\omega(b_2,b_1)\] because $\tau(f')=-f'$ for $f'\in A[x]$ is an odd polynomial as $f\in A[x]$ is an even polynomial. The equation $\omega(xb_1,b_2)+\omega(b_1,xb_2) = 0$ can be derived from $\tau(x)=-x$.

It follows that we have a morphism 
$$[x]:\fc \to [\fg/G]$$
which deserves to be called the companion map for the symplectic group. To obtain a companion matrix $x_\bullet:\fc\to \fg$, it is enough to find a trivialization of the $G$-bundle associated with the non-degenerate symplectic form $\omega: B\otimes_A B\to A$. For most applications, particularly the Hitchin fibration, we only need the section $[x]:\fc\to [\fg/G]$.

\subsection{Odd special orthogonal groups}

In the case $G = \SO_{2n+1}$, we have $\fc = \Spec(A)$ with $A = k[a_{2},a_4,\dots,a_{2n}]$. The spectral cover is defined as $\fs = \Spec(B)$ where $B = A[x]/(f)$ with $f=x f_0$ and $f_0 = x^{2n}+a_2x^{2n-2}+\cdots +a_{2n}$. $B$ is a free $A$-module of rank $2n+1$. As in the symplectic case, we will define a symmetric non-degenerate bilinear form $B\otimes_A B\to A$ for which the multiplication by $x$ is anti-self-adjoint.

The standard representation $\SO_{2n+1}\to \GL_{2n+1}$ gives rise to a map $\fc\to \fc_{2n+1}=\Spec(k[a_1,\ldots, a_{2n+1}])$ which is a closed embedding defined by the ideal generated by $a_1,a_3,\ldots,a_{2n+1}$. We have $B=A\otimes_{A_{2n+1}}B_{2n+1}$ where $B_{2n+1}$ is the finite free $A_{2n+1}$-module of rank $2n+1$ defining the spectral cover in the case $\GL_{2n+1}$. We also have $B^*=A\otimes_{A_{2n+1}}B_{2n+1}^*$ where $B^*=\Hom_A(B,A)$ and $B_{2n+1}^*=\Hom_{A_{2n+1}}(B_{2n+1},A_{2n+1})$. Following the discussion in the linear case $B_{2n+1}^*$ is a free $B_{2n+1}$ generated by the element $\beta^*_{2n+1}=(f')^{-1} \mu$ where $\mu$ is the trace form $\mu(b_1\otimes_A b_2)=\tr(b_1b_2)$. It induces a generator $\beta^*$ of $B^*$ as a $B$-module. We define the bilinear form $\omega:B\otimes_A B\to A$ by 
\begin{equation}
	\label{eqn: odd orthogonal form}
	\omega(b_1,b_2)=\beta^*(b_1,\tau(b_2))=\tr({f'}^{-1} b_1\tau(b_2)).
\end{equation}
The bilinear form $\omega$ is a nondegenerate bilinear form because $\beta^*$ is. It is symmetric because $\tau(f')=f'$ as $f'$ is an even polynomial. The equation $\omega(xb_1,b_2)+\omega(b_1,xb_2)$ can be derived from the fact $\tau(x)=-x$.

By choosing a trivialization of the determinant, we obtain a companion map $[x]:\fc\to [\fg/G]$ for $G=\SO_{2n+1}$.

\subsection{Even special orthogonal groups}

The case $G = \SO_{2n}$ is slightly more difficult for the map $\fc\to \fc_{2n}$ induced by the standard representation of $\SO_{2n}$ is not a closed embedding. Indeed, we have $\fc_{2n}=\Spec(A_{2n})$ where $A_{2n}=k[a_1,\ldots,a_{2n}]$ but $\fc=\Spec(A)$ where $A=k[a_2,\ldots,a_{2n-2},p_n]$ where $p_n$ is the Pfaffian satisfying $p_n^2=a_{2n}$ does not lie in the image of $A_{2n}\to A$. If $B_{2n}$ is the spectral cover of $A_{2n}$ and $B=A\otimes_{A_{2n}}B_{2n}$ then we have  
$$B = A[x]/(x^{2n}+a_2x^{2n-2}+\cdots+a_{2n-2}x^2+p_n^2).$$
As indicated by Hitchin \cite{hitchin}, the true spectral cover for even special orthogonal groups is not $B$ but its blowup $\wt B$ along the singular locus defined by $x$. We have
$$\wt{B} = A[x,p_{n-1}]/\big(xp_{n-1}-p_n,\;  x^{2n-2}+a_2x^{2n-4}+\cdots+a_{2n-2}+p_{n-1}^2 \big)$$
which is a free $A$-module of rank $2n$ and smooth as a $k$-algebra. We have an involution $\tau$ on $B$ and $\wt B$ given by $\tau(x)=-x$ and $\tau(p_{n-1})=-p_{n-1}$.

The dualizing sheaf $\omega_{\wt{B}/A}$ is a free rank-one $\wt B$-module, canonically isomorphic to $\wt B$ away from the ramification locus. As a $\wt B$-submodule of $\Fr(\wt B)$ it is generated by the inverse of the different $\fD_{\wt B/A}$ which is given by the formula 
\begin{eqnarray*}
	\fD_{\wt B/A} &=& \det \begin{pmatrix}
		- p_{n-1} & f' \\ -x & 2 p_{n-1}
	\end{pmatrix}\\ 
	&=& (n-1)x^{2(n-1)}+ (n-2)a_2 x^{2(n-2)} + \cdots +a_{2n-2} x^2+p_{n-1}^2.
\end{eqnarray*}
In other words, the bilinear form $\wt B\otimes_A \wt B \to A$ given by 
$$b_1 \otimes_A b_2 \mapsto \tr_{\wt B/A} (\fD_{\wt B/A}^{-1} b_1 b_2)$$
is non-degenerate. As in the symplectic and odd special orthogonal cases, we now consider the symmetric bilinear form 
$$\omega(b_1,b_2)=\tr_{\wt B/A} (\fD_{\wt B/A}^{-1} b_1 \tau(b_2))$$
Then $\omega$ is a non-degenerate symmetric bilinear form because $\tau(\fD_{\wt B/A})=\fD_{\wt B/A}$, and it satisfies 
$$\omega(xb_1,b_2)=-\omega(b_1,xb_2).$$

After a choice of trivialization of the determinant of $\wt B$ as a free $A$-module of rank $n$, the multiplication by $x$ gives rise to the companion section $[x]:\fg \git G \to [\fg/G]$ for the odd special orthogonal group $G=\SO_{2n+1}$.

\subsection{The group $G_2$}
\label{sec: G2 case}
In the case $G_2$, the invariant quotient is $A = k[e,q]$ with $\deg(e) = 2$ and $\deg(q) = 6$. The spectral cover $\fs = \Spec(B)$ of $\fc=\Spec(A)$ given by 
$$B = A[x]/\left(xf_0\right)\quad\text{for }f_0 = x^6-ex^4+\frac{e^2}{4}x^2+q$$
is a reducible cover of $A$ with two components corresponding to the quotient maps
\[
B\to B' = A[x]/(f_0)\quad\text{and}\quad B\to A=A[x]/(x).
\]
The cover $\fs' = \Spec(B')$ of $A$ is finite, flat of degree 6, and factors through two subcovers, of degrees 2 and 3, corresponding to the sub-$A$-algebras
$$A\subset A[y]/\left(y^3-ey^2+\frac{e^2}{4}y+q\right)\subset B'\quad \text{where }y=x^2$$
$$A\subset A[z]/\left( z^2+q \right)\subset B'\quad \text{where }z = x\left(x^2-\frac{e}{2}\right)$$

Let $\epsilon\in B[q^{-1}]^*:=\Hom_{A[q^{-1}]}(B[q^{-1}],A[q^{-1}])$ be dual to $f_0$; $\delta_i\in B[q^{-1}]^*$ be dual to $x^{i}$; and $\eta_i\in B[q^{-1}]^*$ be dual to $x^iz$ for $i=1,2,3$.
Let ${\rm tr}_z$ denote the skew-symmetric bilinear form on $B$ given by
\[
{\rm tr}_z(g,h) = {\rm Tr}_{{\rm Frac}(B)/{\rm Frac}(A)}\left(\frac{g(x)h(-x)z}{f(x)}\right)
\]
We will denote by $\rho$ the 3-form on $B[q^{-1}]$ given by
\begin{equation}
	\label{eqn:G2form}
	\rho:=\delta_1\wedge \delta_2\wedge \eta_3 + \delta_1\wedge\eta_2\wedge \delta_3 + \eta_1\wedge\delta_2\wedge\delta_3 - q\cdot \eta_0\wedge \eta_1\wedge \eta_2 + \epsilon\wedge {\rm tr}_z
\end{equation}
A priori, the 3-form above is valued in $A[q^{-1}]$. The next proposition tells us that it restricts to an element of $\bigwedge^3_A B^*$.
\begin{proposition}
	Restricting the 3-form $\rho$ to $B\to B[q^{-1}]$ induces a 3-form $\rho\in \bigwedge^3_A B^*$. In other words, $\rho$ takes values in $A$ when restricted to $B$.
\end{proposition}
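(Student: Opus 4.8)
The plan is to verify that $\rho$ has no pole along the divisor $\{q=0\}$ of $\fc=\Spec(A)$, $A=k[e,q]$. I would fix the $A$-basis $1,x,x^2,\dots,x^6$ of $B$ and let $v_0^*,\dots,v_6^*\in B[q^{-1}]^*$ be its dual basis; since this is simultaneously an $A$-basis of $B^*$, one has $\bigwedge^3_A B^*=\bigoplus_{i<j<k}A\,v_i^*\wedge v_j^*\wedge v_k^*$, so the assertion is precisely that, once $\rho$ is expanded in this basis, every coefficient lies in $A$ rather than merely in $A[q^{-1}]$.

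The first step is to write each of the five ingredients of $\rho$ in these coordinates and record its pole at $q=0$. From $f_0=q\cdot 1+(x^6-ex^4+\tfrac{e^2}{4}x^2)$, and since the complement of $\langle f_0\rangle$ in the chosen basis is the ideal $(x)\subset B[q^{-1}]$, one gets $\epsilon=\tfrac1q v_0^*$, a simple pole. For $\mathrm{tr}_z$ I would use the Euler (residue) identity as in the proof of Lemma~\ref{lem:beta}: writing $f=xf_0$ and $z=x(x^2-\tfrac e2)$, the root $x=0$ contributes nothing and $f'(\theta)=\theta f_0'(\theta)$ at the roots $\theta$ of $f_0$, whence $\mathrm{tr}_z(x^a,x^b)=(-1)^b\big(h_{a+b-3}-\tfrac e2 h_{a+b-5}\big)$, where $h_m$ is the degree-$m$ complete homogeneous symmetric function of the roots of $f_0$, a polynomial in $e,q$; so $\mathrm{tr}_z\in\bigwedge^2_A B^*$ already, and only the $v_a^*\wedge v_b^*$ with $a+b$ odd occur. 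Finally $\delta_1,\delta_2,\delta_3,\eta_1,\eta_2,\eta_3$ (together with $\eta_0$, dual to $z=x^0z$, which since $z=x^3-\tfrac e2 x$ coincides with $\delta_3$) form the dual basis of the $A[q^{-1}]$-basis $f_0,x,x^2,x^3,xz,x^2z,x^3z$ of $B[q^{-1}]$; the transition matrix to $1,x,\dots,x^6$ has determinant $\pm q$, so each of these dual elements has at worst a simple pole, concentrated on $v_0^*$, and inverting the matrix gives $\delta_1=v_1^*$, $\delta_2=v_2^*+\tfrac e2 v_4^*+\tfrac{e^2}{4}v_6^*$, $\delta_3=v_3^*+\tfrac e2 v_5^*$, $\eta_2=v_5^*$ (all integral) and $\eta_1=\tfrac{e}{2q}v_0^*+v_4^*+\tfrac e2 v_6^*$, $\eta_3=-\tfrac1q v_0^*+v_6^*$.

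The second step is to assemble. The summand $\delta_1\wedge\eta_2\wedge\delta_3$ is integral, and $-q\,\eta_0\wedge\eta_1\wedge\eta_2$ is integral because the prefactor $q$ cancels the only pole in sight (that of $\eta_1$) while $\eta_2=v_5^*$ is itself integral. Dropping these and the integral parts of $\delta_1\wedge\delta_2\wedge\eta_3$ and $\eta_1\wedge\delta_2\wedge\delta_3$, the remaining, a priori polar, part of $\rho$ equals $\tfrac1q\,v_0^*\wedge R$ with $R=-\delta_1\wedge\delta_2+\tfrac e2\,\delta_2\wedge\delta_3+\mathrm{tr}_z\in\bigwedge^2_A B^*$. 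Substituting the formulas above, all the $v_a^*\wedge v_b^*$ with $1\le a<b$ that do not already carry an explicit $q$ cancel between $-\delta_1\wedge\delta_2+\tfrac e2\delta_2\wedge\delta_3$ and $\mathrm{tr}_z$, leaving $R=v_0^*\wedge(-v_3^*-\tfrac e2 v_5^*)+q\big(v_4^*\wedge v_5^*-v_3^*\wedge v_6^*-\tfrac{3e}{2}v_5^*\wedge v_6^*\big)$. Wedging with $v_0^*$ annihilates the first summand and leaves $q$ times an integral $3$-form, so $v_0^*\wedge R$ is divisible by $q$ in $\bigwedge^3_A B^*$ and $\tfrac1q v_0^*\wedge R$ is integral. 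Hence $\rho\in\bigwedge^3_A B^*$; the bookkeeping even gives the clean form $\rho=(\text{integral terms})+v_0^*\wedge v_4^*\wedge v_5^*-v_0^*\wedge v_3^*\wedge v_6^*-\tfrac{3e}{2}\,v_0^*\wedge v_5^*\wedge v_6^*$.

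I expect the main obstacle to be exactly this middle computation: pinning down the $\delta_i$ and $\eta_i$ (equivalently the transition matrix and the precise seat of the poles), evaluating $\mathrm{tr}_z$ through Euler's formula, and then checking the cancellation of $R$ modulo $q$. That cancellation is where the particular shape of the correction terms defining $\rho$ is used — the $\tfrac e2$-coefficients, and the factor $q$ in front of $\eta_0\wedge\eta_1\wedge\eta_2$, are precisely what is needed — and it reflects the fact, visible from $z^2+q=f_0$, that $z$ becomes nilpotent modulo $q$.
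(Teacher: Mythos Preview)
Your proposal is correct and follows essentially the same route as the paper: both arguments observe that the poles of $\epsilon,\delta_i,\eta_i$ are at worst simple and concentrated on the $v_0^*$ (equivalently $1^*$) direction, so integrality reduces to checking the contraction $\iota_1\rho$ (your $\tfrac1q\,v_0^*\wedge R$), and both then verify this residue lands in $A$ by rewriting in the dual basis of $\{x^i\}$. The only differences are cosmetic --- the paper uses the basis $\{1,x^i,x^iz\}$ and contracts against $1$ directly, while you work in the monomial basis throughout and spell out the Euler-formula evaluation of $\mathrm{tr}_z$ more explicitly; your identification $\eta_0=\delta_3$ is correct and implicit in the paper's setup.
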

\begin{proof}
	Consider the $A$-basis of $B$ given by $$\{1,x^i,x^iz\colon i = 1,2,3\}.$$ 
	This differs from the $A[q^{-1}]$-basis $$\{f_0,x^i,x^iz\colon i = 1,2,3\}$$ of $B[q^{-1}]$ only by scaling $f_0$. As $\rho$ is valued in $A$ on the $A$-linear span of the latter basis, it suffices to check the contraction $\iota_1\rho$ of $\rho$ along $1\in B$ is valued in $A$. We compute
	\begin{align*}
		\iota_1\rho &= \eta_1\wedge\eta_2-\frac{e}{2}\eta_2\wedge \eta_3 + \frac{1}{q}({\rm tr}_z - \delta_1\wedge\delta_2+\frac{e}{2}\delta_2\wedge \delta_3 - i_{x^3z}{\rm tr}_z\wedge \epsilon + \frac{e}{2}i_{xz}{\rm tr}_z\wedge \epsilon) \\
		& = \eta_1\wedge\eta_2-\frac{e}{2}\eta_2\wedge \eta_3 + \Big[\frac{1}{q}({\rm tr}_z - \delta_1\wedge\delta_2+\frac{e}{2}\delta_2\wedge \delta_3) -\iota_1{\rm tr}_z\wedge \epsilon \Big] \\
	\end{align*}
	Rewriting the latter in terms of a dual basis $\xi_i$, $i=0,\dots,6$ for the $A$-basis $\{x^i\colon i=0,\dots,6\}$ of $B$, we see that the expression in square brackets above is
	\[
	\iota_1\rho = \e_3\wedge \e_6+\e_4\wedge \e_5 -\frac{3e}{2} \e_5\wedge \e_6
	\]
	whose image lies in $A$.
\end{proof}

As the previous proposition illustrates, working with the form $\rho$ requires significantly more computational effort. As such, Propositions \ref{prop:G2form is nondegenerate} and \ref{prop:G2form is compatible with x} will be checked primarily with computer algebra packages. These computations were done in Macaulay2; explicit code for each calculation is referred to in Appendix \ref{appendix: Macaulay code}.

\begin{proposition}
	\label{prop:G2form is nondegenerate}
	Let $\nu$ be the bilinear form associated to $\rho$ as in equation \eqref{eqn: associated symmetric form} and let $\omega\in S^2_AB^*$ be the symmetric, nondegenerate form given by the formula \eqref{eqn: odd orthogonal form}. Then, $\nu = -2^4 3^2\omega$.
\end{proposition}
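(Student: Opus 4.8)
The plan is to check the asserted identity $\nu = -2^4 3^2\,\omega$ directly, as an equality of symmetric $A$-bilinear forms on the free rank-$7$ module $B$, $A = k[e,q]$, by computing both sides in one fixed $A$-basis. I would use the monomial basis $v_0 = 1, v_1 = x, \dots, v_6 = x^6$ of $B = A[x]/(xf_0)$ together with the generator $\iota = v_0 \wedge \cdots \wedge v_6$ of $\bigwedge^7_A B$; this $\iota$ is the one coming from the trivialization of $\det B$ used to define both the $G_2$-reduction attached to $\rho$ and the orthogonal reduction attached to $\omega$ via \eqref{eqn: odd orthogonal form}, and fixing it is exactly what gives the constant $-2^4 3^2$ a meaning (a different generator of $\bigwedge^7_A B$ would replace it by $-2^4 3^2$ times a unit of $k^\times$). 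Since $A \hookrightarrow A[q^{-1}]$ it is harmless to carry out the comparison over $A[q^{-1}]$, and since $\fs \to \fc$ is generically étale one may even restrict to the dense open locus of $\fc$ where $xf_0$ is separable.

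The $\omega$-side is routine. By Lemma~\ref{lem:beta}, applied to the monic degree-$7$ polynomial $f = xf_0 = x^7 - ex^5 + \tfrac{e^2}{4}x^3 + qx$, one has $\omega(x^i,x^j) = (-1)^j \tr_{B/A}\!\big({f'}^{-1}x^{i+j}\big)$ with $f' = 7x^6 - 5ex^4 + \tfrac{3e^2}{4}x^2 + q$; the Euler formula gives $\tr_{B/A}({f'}^{-1}x^m) = \delta_{m,6}$ for $0 \le m \le 6$, and for $6 < m \le 12$ one reduces $x^m$ modulo $f$ and reads off the coefficient of $x^6$, producing the sequence $0,0,0,0,0,0,1,0,e,0,\tfrac{3e^2}{4},0,\tfrac{e^3}{2}-q$ for $m = 0,\dots,12$. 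Because $f$ is odd, $\tr_{B/A}({f'}^{-1}\cdot)$ kills odd elements, so the resulting matrix of $\omega$ is block diagonal for $B = A\langle 1,x^2,x^4,x^6\rangle \oplus A\langle x,x^3,x^5\rangle$; equivalently, in the idempotent basis $\{\varepsilon_\mu : \mu \text{ a root of } f\}$ available over the separable locus, $\omega$ is anti-diagonal with $\omega(\varepsilon_\mu,\varepsilon_{-\mu}) = f'(\mu)^{-1}$ and $\omega(\varepsilon_\mu,\varepsilon_\mu) = 0$ (note $\tau(\varepsilon_\mu) = \varepsilon_{-\mu}$ since $f$ is odd).

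The $\nu$-side carries the real work. Here I would first rewrite $\rho$ explicitly in the dual basis $v_0^*,\dots,v_6^*$: the change of basis between $\{1, x^i, x^iz : i = 1,2,3\}$ and $\{x^i : 0 \le i \le 6\}$ is unitriangular (recall $z = x^3 - \tfrac{e}{2}x$), and the Proposition preceding this one already carries out the essential part of that rewriting — it shows $\rho \in \bigwedge^3_A B^*$ and computes $\iota_1 \rho$ in the $v_i^*$-basis. The remaining input is the matrix of $\tr_z$, obtained from $\tr_z(x^i,x^j) = \sum_{\lambda} \lambda^i(-\lambda)^j z(\lambda)/f'(\lambda)$, the residue reading of $\Tr_{\Frac(B)/\Frac(A)}(\,\cdot\,z/f)$ with the sum over the roots of $f = xf_0$ and the root $\lambda = 0$ contributing nothing because $z(0) = 0$; on idempotents this is $\tr_z(\varepsilon_\mu,\varepsilon_{-\mu}) = z(\mu)/f'(\mu)$. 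With $\rho$ and the contractions $\rho_{v_i} \in \bigwedge^2_A B^*$ in hand, one forms the $7$-forms $\rho_{v_i}\wedge\rho_{v_j}\wedge\rho \in \bigwedge^7_A B^*$, pairs each with $\iota$, and checks that the resulting symmetric matrix equals $-2^4 3^2$ times the one computed above — a determined but large computation, most naturally discharged with computer algebra, as in Appendix~\ref{appendix: Macaulay code}. As a cross-check and a conceptual simplification, over the separable locus one may instead verify directly that $\nu$, like $\omega$, is anti-diagonal in the basis $\{\varepsilon_\mu\}$ (equivalently that $x$ is $\nu$-anti-self-adjoint, cf.\ Proposition~\ref{prop:G2form is compatible with x}); then the Galois action permuting the three root-pairs $\{\pm\alpha\},\{\pm\beta\},\{\pm\gamma\}$ reduces the whole statement to the two scalar identities $\nu(\varepsilon_0,\varepsilon_0) = -2^4 3^2\,q^{-1}$ and $\nu(\varepsilon_\alpha,\varepsilon_{-\alpha}) = -2^4 3^2\,f'(\alpha)^{-1}$, each of which unwinds to a manageable identity in $\alpha, e, q$ and the numerical coefficients of \eqref{eqn:G2form}.

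The main obstacle is not a single clever step but the bookkeeping, concentrated in two places. First, pinning down the generator $\iota$ of $\bigwedge^7_A B$ and checking it is the one compatible with the common determinant trivialization — the constant $-2^4 3^2$ is correct only for that choice, and getting it wrong changes the answer by a unit; verifying the compatibility of $\iota$ with $\tau$ up to the sign $(-1)^{0+1+\cdots+6}$ is part of this. Second, $B$ is not a domain — the spectral cover $\fs$, cut out by $xf_0$, is reducible — so the trace defining $\tr_z$ must be read componentwise, as the sum over the two factors of $\Frac(B)$ (the étale sextic part and the part over $x = 0$), with $z/f = (x^2 - \tfrac{e}{2})/f_0$ regular along the latter; this is precisely what the preceding Proposition guarantees, and it is what ensures $\nu$ is genuinely $A$-valued so that the comparison with $\omega$ is meaningful. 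Once these are in order, what remains is a finite verification, which is why — as the authors note — it is natural to complete it by computer algebra.
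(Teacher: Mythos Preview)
Your proposal is correct and follows essentially the same route as the paper: a direct verification, in a fixed $A$-basis of $B$, that the matrix of $\nu$ equals $-2^4 3^2$ times the matrix of $\omega$, discharged by computer algebra (Appendix~\ref{appendix: Macaulay code}). The only organizational difference is that the paper stores $\rho$ via the cross product $c$ on $(B,\omega)$ constructed in A.1 and computes $\nu$ from that representation, whereas you work directly from the explicit formula~\eqref{eqn:G2form}; your additional remarks on the idempotent basis and the Galois reduction to two scalar identities are a pleasant conceptual cross-check that the paper does not spell out.
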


\begin{proposition}
	\label{prop:G2form is compatible with x}
	The form $\rho$ is compatible with the endomorphism $[x]$, in the sense that
	\[
	\rho(xb_1,b_2,b_3)+\rho(b_1,xb_2,b_3)+\rho(b_1,b_2,xb_3) = 0.
	\]
\end{proposition}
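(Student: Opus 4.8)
The plan is to reduce the asserted identity to a finite computation, strip off a structurally transparent summand of $\rho$, and verify the remainder with computer algebra, as the paper already does for the other $G_2$ propositions. Both sides of the identity are $A$-trilinear in $(b_1,b_2,b_3)$ and $\rho$ is alternating, so it suffices to check it for $b_1,b_2,b_3$ ranging over the $\binom{7}{3}$ triples of distinct elements of the $A$-basis $\{1,x,x^2,x^3,xz,x^2z,x^3z\}$ of $B$. On this basis multiplication by $x$ is completely explicit: $x^i\mapsto x^{i+1}$, with higher powers reduced by the defining relation $xf_0=0$ (equivalently $x^7=ex^5-\tfrac{e^2}{4}x^3-qx$) and the products $x\cdot x^iz$ reduced the same way. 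Since the preceding proposition already gives $\rho\in\bigwedge^3_A B^*$, this presents the statement as a concrete finite check; I would streamline it as follows before handing it to Macaulay2, as in the code cited for Proposition~\ref{prop:G2form is nondegenerate}.

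First I would isolate the last summand $\epsilon\wedge{\rm tr}_z$ of $\rho$, which is compatible with $[x]$ for a reason. Passing to $A[q^{-1}]$, where $\epsilon$ and ${\rm tr}_z$ are defined, note that $\epsilon$ vanishes on $xB$ — in the basis $\{f_0,x^i,x^iz\}$ the functional $\epsilon$ reads off the $f_0$-coordinate, and since $xf_0=0$ the image of multiplication by $x$ has no $f_0$-component — and that ${\rm tr}_z$ is anti-self-adjoint for $[x]$, its definition already building in the involution $\tau\colon x\mapsto-x$:
\[
{\rm tr}_z(xg,h)+{\rm tr}_z(g,xh)={\rm Tr}_{\Frac(B)/\Frac(A)}\!\left(\frac{x\,g(x)\,h(-x)\,z}{f}\right)-{\rm Tr}_{\Frac(B)/\Frac(A)}\!\left(\frac{x\,g(x)\,h(-x)\,z}{f}\right)=0.
\]
For the natural derivation action of $[x]$ on $\bigwedge^{\bullet}_A B^*$ (under which the assertion reads $[x]\cdot\rho=0$), these two facts give $[x]\cdot(\epsilon\wedge{\rm tr}_z)=([x]\cdot\epsilon)\wedge{\rm tr}_z+\epsilon\wedge([x]\cdot{\rm tr}_z)=0$, so the identity for $\rho$ is equivalent to the identity for $\rho_0:=\delta_1\wedge\delta_2\wedge\eta_3+\delta_1\wedge\eta_2\wedge\delta_3+\eta_1\wedge\delta_2\wedge\delta_3-q\,\eta_0\wedge\eta_1\wedge\eta_2$. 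Now $\rho_0$ annihilates the $f_0$-direction, while $[x]$ kills that direction and acts as multiplication by $x$ on the complementary ideal $xB[q^{-1}]\cong B'[q^{-1}]$, $B'=A[x]/(f_0)$; so the statement reduces to the claim that $\rho_0$ is compatible with multiplication by $x$ on $B'$ — a finite check on the six-element basis $\{x^i,x^iz\}$ of $B'[q^{-1}]$ — and this descends to the integral statement because $q$ is a non-zero-divisor in $A$.

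This reduced identity on $B'$ carries essentially all the computational weight and is the one part I do not expect to shorten by hand; it is what I would actually feed to Macaulay2. One might hope to exploit the factorization of $B'[q^{-1}]$ as the tensor product $A'[y]/(y^3-ey^2+\tfrac{e^2}{4}y+q)\otimes_{A'}A'[z]/(z^2+q)$ (with $y=x^2$ and $A'=A[q^{-1}]$), under which multiplication by $x$ becomes the product of multiplication by $(y-\tfrac{e}{2})^{-1}$ on the degree-$3$ factor with multiplication by $z$ on the degree-$2$ factor — the latter automatically anti-self-adjoint for the symplectic form on that $2$-dimensional piece — but the basis $\{x^i,x^iz\}$ in terms of which $\rho_0$ is written is not adapted to this splitting, and I expect no genuine saving over the direct verification. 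So the honest plan is to record the machine computation. The only obstacle is the size and bookkeeping of that last step; conceptually, once $\epsilon\wedge{\rm tr}_z$ and the $f_0$-direction are set aside, there is nothing subtle.
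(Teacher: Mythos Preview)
Your argument is correct, and the hand reduction you give — that $\epsilon$ annihilates the ideal $xB[q^{-1}]$ (since the six basis vectors $x^i,x^iz$ span it) and that ${\rm tr}_z$ is anti-self-adjoint for $[x]$, so the Leibniz rule kills $[x]\cdot(\epsilon\wedge{\rm tr}_z)$ — is a genuine structural observation that the paper does not isolate.

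The paper, however, does not verify the identity after the fact at all. In Appendix~A it constructs $\rho$ through a cross product $c$ on $(B,\omega)$ satisfying $c(xb_1,b_2)+c(b_1,xb_2)=xc(b_1,b_2)$, and the point is that this Leibniz condition is built into the reconstruction of $c$ from its trace $tc$: the recursion uses ${\rm tr}_{B/A}\bigl(x^l c(x^i,x^j)\bigr)=\sum_r\binom{l}{r}\,tc(x^{i+r},x^{j+l-r})$, a formula obtained by iterating the Leibniz rule, and Pascal's identity $\binom{l}{r-1}+\binom{l}{r}=\binom{l+1}{r}$ shows that any $c$ produced this way automatically satisfies compatibility with $[x]$. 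So once the Macaulay2 computation outputs $c$, Proposition~\ref{prop:G2form is compatible with x} is literally free — this is what the paper means by ``immediate from the computer calculation.'' Your route instead starts from the closed formula~\eqref{eqn:G2form}, peels off the $\epsilon\wedge{\rm tr}_z$ piece conceptually, and pushes the residual $\rho_0$ down to $B'$ for a separate machine check. That last step is in fact closer to the paper's Section~\ref{sec: G2 case theoretical}, where the forms on $B'$ are realized as special forms supported on components of symmetric powers, and compatibility with $[x]$ is deduced there from the vanishing of $x\otimes 1\otimes 1+1\otimes x\otimes 1+1\otimes 1\otimes x$ on the special component — so one could even replace your final Macaulay2 step by that argument. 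What the paper's appendix buys is that nothing needs to be checked; what your decomposition buys is an explanation of why the individual summands of $\rho$ behave well.
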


As such, the form $\rho$ together with a trivialization of the determinant gives a map $[x]\colon \fc\to [\fg_2/G_2]$.

\section{Special components}

In the previous section, we gave explicit formulas for the tensors defining the reduction of the vector bundle $\cO_{\fc\times_{\fc_n} \fs_n}$ to $G$ so that the companion section for $\GL_n$ induces the companion section for classical group $G$. These explicit formulas may feel like miracles, especially in the $G_2$ case where a computer algebra system is needed. In this section, we will derive them from the geometry of spectral covers, which makes the construction more conceptual, especially in the $G_2$ case. In subsequent work, we use this approach to construct the companion section uniformly. 

\subsection{Special form and component  associated with a subcover} \label{subsection:special form}

Let $A$ be a $k$-algebra, $B$ a finite flat $A$-algebra of degree $n$ generated by one element $b\in B$, and $A'\subset B$ an $A$-subalgebra of $B$ such that $A'$ is finite flat of degree $m$ over $A$ generated by one element $a'\in A'$ and $B$ is a finite flat $A'$-algebra of degree $d$ generated by $b$. Under these assumptions, we have $B\simeq A[x]/P(x)$ where $P(x)$ is the characteristic polynomial of the $A$-linear $b:B\to B$ defined as the multiplication by $b$. Similarly we have $A'\simeq A[x]/(P_1(x))$ where $P_1(x)$ is the the characteristic polynomial of the $A$-linear operator $a':A'\to A'$, and $B\simeq A'[x]/P_2(x)$ where $P_2(x)$ is the characteristic polynomial of the $A'$-linear operator $b:B\to B$. 

Assuming that the characteristic of $k$ is greater than $d$, we want to construct an alternating $d$-form $$\omega_{A'}:\wedge^d_A B\to A$$ supported on a special component of $\Spec(S^d_A B)$ isomorphic to $\Spec(A')$. We explain what this means. As far as we know, the concept of non-degeneracy for $d$-forms is not yet defined for $d\geq 3$ and thus we can prove the it only for $d=1$ or $d=2$. However, we expect that the form we construct is non-degenerate for a reasonable definition of this concept. As to the special component, $\bigwedge^d_A B$ is a module over the ring of symmetric tensors $ (\bigotimes_A^ d B)^{\fS_d}$. We will construct a surjective homomomorphism of $A$-algebras $(\bigotimes_A^ d B)^{\fS_d} \to A'$ which realizes $\Spec(A')$ as an irreducible component of $\Spec((\bigotimes_A^ d B)^{\fS_d})$ if $B$ is generically étale over $A$ and $A'$ is a domain. 

The homomorphism of $A$-algebras $(\bigotimes_A^ d B)^{\fS_d} \to A'$ is constructed as follows. Let $P_2(x)=x^d+a'_1 x^{d-1}+\cdots+a'_d$ be the characteristic polynomial of the $A'$-linear map $b:B\to B$. Then we have
\[B=A'[x]/(x^d+a'_1 x^{d-1}+\cdots+a'_d).\]
We consider the polynomial ring $R=k[x_1,\ldots,x_d]$ and the subring $S$ of invariant polynomials under the symmetric group $\fS_d$. We have \[ S=k[x_1,\ldots,x_d]^{\fS_d}=k[\alpha_1,\ldots,\alpha_d]\] with \[\alpha_i=(-1)^i \sum_{1\leq j_1 < \cdots < j_i \leq d} \alpha_{j_1} \ldots \alpha_{j_d}.\]
Since $R$ and $S$ are regular, and $R$ is a finite generated $S$-module, $R$ is a finite flat $S$-algebra of degree $d!$.   
We consider the homomorphism of algebras $S\to A'$ given by $\alpha_i \mapsto a'_i$ and the base change $A'\otimes_S R$ which is a finite flat $A'$-algebra of degree $d!$ equipped with an action of $\fS_d$. We have $(A'\otimes_S R)^{\fS_d}=A'$.
Moreover, for every $i\in \{1,\ldots,d\}$ we have a homomorphism of $A'$-algebras $B \to R\otimes_S A'$ given by $x\mapsto x_i$ which together give rise to a surjective homomorphism of $A'$-algebras 
$\bigotimes_{A'}^d B \to A'\otimes_S R$, which is $\fS_d$-equivariant.  
We derive a $\fS_d$-equivariant surjective homomorphism of $A$-algebras
\begin{equation} \label{special-component}
	\bigotimes_{A}^d  B\to \bigotimes_{A'}^d  	B\to A'\otimes_S R.
\end{equation}
By taking the $\fS_d$-invariant, we obtain the desired homomorphism of algebras $$S^d_A B=(\bigotimes_A^ d B)^{\fS_d} \to A',$$ which is surjective because taking $\fS_d$-invariants is an exact functor under the characteristic assumption.

We will now construct a special $d$-form on $B$
\[\omega_{A'}:\bigwedge^d_A B \to A\] 
supported on the special component. As above, we have a surjective homomorphism of algebras $\fS_d$-equivariant surjective homomorphisms of $A$-algebras $\otimes^d_A B \to \otimes^d_{A'}B \to A'\otimes_S R$ which induces a surjective $A$-linear maps of the alternating parts 
$\bigwedge^d_A B \to \bigwedge^d_{A'} B \to A'\otimes_S R^{\sgn}$ where $R^{\sgn}$ is the direct factor of $R$ as $S$-module in which $\fS_d$ acts as the sign character. It is known that $R^{sgn}$ is a free $S$-module generated by $\prod_{1\leq i < j \leq d} (x_i-x_j)$. We thus obtains a surjective $A$-linear map $\bigwedge_A^d B \to A'$. By composing it with the generator of $\Hom_A(A',A)$ constructed in \ref{lem:beta} we obtain the special $d$-form $\omega_{A'}:\bigwedge^d_A B \to A$ which is supported by the special by construction.

Let us discuss the non-degeneracy of the special $d$-form $\omega_{A'}:\bigwedge^d_A B \to A$. For $d=1$, this follows from Lemma \ref{lem:beta}. We can check that it is also non-degenerate everywhere for $d=2$. For $d\geq 3$, we don't know a general definition of non-degeneracy but it easy to see that the special form $\omega_{A'}$ is everywhere non-zero.  In dimension 6 and 7 where the definition of non-degeneracy is available, we will check that the special $d$-form is everywhere non-degenerate by direct calculation. 

\subsection{$\Sp_{2n}$ case}

We recall in the case $G=\Sp_{2n}$, we have $\fc=\Spec(A)$ with $A=k[a_2,\ldots,a_{2n}]$. The spectral cover $\mathfrak{s}=\Spec(B)$ where  \[B=A[x]/(x^{2n}+a_2 x^{2n-2} +\cdots +a_{2n})\] is a free $A$-module of rank $2n$, is equipped with an involution $\tau:B\to B$ given $\tau(x)=-x$. We consider the subalgebra $A'$ of $B$ consisting of elements fixed under $\tau$
\[ A'=A[y]/(y^n+a_2 y^{n-1}+\cdots +a_{2n}).\]
We then have $B=A'[x]/(x^2-y)$. 

The construction of the special form and special component in \ref{subsection:special form} gives rise to an alternating form
\[ \omega_{A'}: \wedge^2_A B\to A\]
supported in the special component $\fc'=\Spec(A')$ of $(\fs\times_\fc \fs)\git \fS_2$ where $\fs=\Spec(B)$ and $\fc=\Spec(A)$. The homomorphism \eqref{special-component} $\Sym_A^2(B) \to A'$ can be explicitly computed elements of the form:
\[ b\otimes_A 1 + 1 \otimes_A b \mapsto \tr_{B/A'}(b).\]
In particular, $x\otimes_A 1+ 1\otimes_A x$ be long to the kernel of $\Sym_A^2(B) \to A'$, and in fact on can verify that it is a generator of the kernel. Since $x\otimes_A 1+ 1\otimes_A x$ annihilates $\omega_{A'}$ we have 
\[ \omega_{A'}(xb_1,b_2)+ \omega_{A'}(b_1,xb_2)=0\]
for every $b_1,b_2\in B$. By Lemma \ref{lem:beta}, the 2-form $\omega_{A'}$ is everywhere non-degenerate. We can also see by explicit calculation that the form $\omega_{A'}$ is the same as the 2-form we constructed in subsection \ref{subsection:symplectic} by means of the Euler formula. 

\subsection{$G_2$ case}
\label{sec: G2 case theoretical}

In the case $G_2$, the invariant quotient is $A = k[e,q]$ with $\deg(e) = 2$ and $\deg(q) = 6$. The spectral cover $\fs = \Spec(B)$ with
\[
B = A[x]/\left(xf_0\right)\quad\text{for }f_0 = x^6-ex^4+\frac{e^2}{4}x^2+q
\]
is a reducible cover of $A$ with two components corresponding to the quotient maps
\[
B\to B' = A[x]/(f_0)\quad\text{and}\quad B\to A
\]
We will define a canonical 3-form on $B$ out of a 3-form and a 2-form on $B'$ associated to subalgebras  
\begin{eqnarray*}
	A\subset A'=A[z]/\left( z^2+q \right)=k[e,y]& \subset & B'=A'[x]/(x^3-\frac{e}{2} x-z) \\
	A\subset  A''=A[y]/\left( y^3 -e y^2 +\frac{e^2}{4} y +q \right)=k[e,z]&\subset & B'= A''[x]/(x^2-y).
\end{eqnarray*}
Since both $A'$ and $A''$ are regular algebras, they are finite flat $A$-modules of rank 2 and 3, respectively, whereas $B$ are finite flat $A'$-module and $A''$-module of rank 3 and 2, respectively. The construction of the special form associated with a subcover gives rise to 
\[ \omega_{A'}:\wedge^3_A B'\to A \mbox{ and } \omega_{A''}:\wedge^2_A B'\to A\]
supported on the special components $\fc'=\Spec(A')$ and $\fc''=\Spec(A'')$ of $\fs^{\times_\fc^3}\git \fS_3$ and $\fs^{\times_\fc^2}\git \fS_2$, respectively. By arguing as in the symplectic case, we see that $\omega_{A'}$ is annihilated by $x\otimes_A 1 \otimes_A 1+ 1\otimes_A x \otimes_A 1+ 1\otimes_A 1 \otimes_A x$ and $\omega_{A'}$ by $x\otimes_A 1+ 1\otimes_A x$. It follows that as alternating forms, they satisfy the relations:
\begin{eqnarray*}
	\omega_{A'}(xb_1,b_2,b_3)+ \omega_{A'}(b_1,xb_2,b_3) + \omega_{A'}(b_1,b_2,xb_3)&=&0 \\
	\omega_{A''}(xb_1,b_2) + \omega_{A''}(b_1,xb_2)&= &0
\end{eqnarray*}
for all $b_1,b_2,b_3\in B$. 

The form $\omega_{A'}$ agrees with the restriction of the form $\rho$ calculated by Macaulay 2 when restricted to $B'\to B$, with the inclusion given by multiplication by $x$:  Indeed, the restriction of $\rho$ takes value 1 on each of:
$$z\wedge x\wedge x^2,\quad x\wedge zx\wedge x^2,\quad 1\wedge x\wedge zx^2$$
and $-q$ on $z\wedge zx\wedge zx^2$. This exactly detects the coefficient of $z$ when these wedges are written in terms of the $A'$ basis $1\wedge x\wedge x^2$ for $\wedge^3_{A'}B'$, which matches $\omega_{A'}$ since the generator of $\Hom_A(A',A)$ as an $A''$ module detects the coefficient of $A'$.

We now build a 3-form on $B$ out of the 3-form $\omega_{A'}$ and 2-form $\omega_{A''}$ on $B'$. Since $B=A[x]/(xf_0)$, $B'=A[x]/(f_0)$ we have exact sequences of free $A$-modules
\[ 0 \to A \to B \to B' \to 0 \mbox{ and } 0 \to B'\to B \to A \to 0\]
where the map $A\to B$ in the first sequence is given by $1\mapsto f_0$ and the map $B'\to B$ in the second sequence is given by $1\mapsto x$. It follows an exact sequences
\[ 0 \to  A \oplus B' \to B \to Q \to 0 \mbox{ and } 0 \to B \to A\oplus B' \to Q \to 0 \]
where $Q=A/(q)=B'/(x)$. It follows an exact sequence
\[ 0 \to \wedge_A^3 B^* \to \wedge^3_A (B')^* \oplus \wedge^2_A (B')^* \to \wedge^2 (B')^*/(q) \to 0\]
where the map $\wedge^2_A (B')^* \to \wedge^2 (B')^*/(q)$ is the reduction modulo $q$, and the map $\wedge^3_A (B')^* \to \wedge^2 (B')^*/(q)$ is obtained by the composition
\[
\wedge^3_A(B')^*\to \wedge^3_AB^*\to \wedge^2_AB^*\to \wedge^2_A(B')^*\to \wedge^2_A(B')^*/(q)
\]
where the first map is induced by the projection $B\to B'$, the second is given by contraction with $f_0$, the third map is induced by the inclusion $B'\to B$ sending $1\mapsto x$, and the final map is the quotient map. Since $q\wedge^2(B')^*\simeq \wedge^2(B')^*$ is a free, rank 1 module over the special component of $S^2_A(B')$, there is a unique generator as an $A''$ module. The 3-form $\omega_{A'}$ and the 2-form $\omega_{A''}$ do not have the same image in $\wedge^2_A(B')^*/(q)$; however, the form $z\omega_{A''}$ is and it gives a generator for the $A''$ submodule of 2-forms compatible with $\omega_{A'}$. The pair $(\omega_{A'},z\omega_{A''})$ comes from an element of $\wedge_A^3B^*$ which agrees with the 3-form calculated by Macaulay2.

\section{Lattice description of affine Springer fibers of classical groups}
\label{sec: affine springer}

Let us recall Kazhdan-Lusztig's definition \cite{kl} of affine Springer fibers. Let $G$ be a split reductive group defined over a field $k$ and $\fg$ its Lie algebra. Let $F=k((\varpi))$  the field of Laurent formal series and $\cO=k[[\varpi]]$ its ring of integers. Let $\gamma\in \fg(F)$ be a regular semisimple element. The affine Springer fiber associated with $\gamma$ is an ind-scheme defined over $k$ whose set of $k$-points is 
$$\cM_\gamma(k)=\{g\in G(F)/G(\cO) | \ad(g)^{-1} \gamma \in \fg(\cO)\}.$$
We note that $\cM_\gamma$ is non-empty only if the image $a\in \fc(F)$ lies in $\fc(\cO)$ where $\fc=\fg\git G$ is the invariant theoretic quotient of $\fg$ by the adjoint action of $G$. As argued in \cite{ngo}, using the Kostant section, we can define an affine Springer fiber $\cM_a$ depending only on $a$ instead of $\gamma$, which is isomorphic to $\cM_\gamma$. 

For $G=\GL_n$, the affine Springer fiber $\cM_a$ has a well-known lattice description. In this case, $\fc=\bA^n$. If $a=(a_1,\ldots,a_n)\in \cO_n$, we form the finite flat $\cO$-algebra
$$B_a=\cO[x]/(f_a)$$
where $f_a=x^n+a_1 x^{n-1}+\cdots +a_n$ by the base change from the universal spectral cover. As $\gamma\in \fg(F)$ is a regular semisimple element, $B_a\otimes_\cO F$ is finite and étale over $F$.  We have a well-known lattice description of the affine Springer fiber $\cM_a$ in this case.
\begin{theorem}
	\label{thm:lattice desription}
	For $G = \GL_n$ and $a\in \fc^{\mathrm rs}(F)\cap \fc(\cO)$, the set $\cM_a(k)$ consists of lattices $\cV$ in the $n$-dimensional vector space $V=B_a\otimes F$ which are also $B_a$-modules. 
\end{theorem}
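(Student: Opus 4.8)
The plan is to identify the two sides of the claimed bijection explicitly and to construct mutually inverse maps between them. On one side we have the set $\cM_a(k)$, which by the discussion preceding the statement is the set of $g \in G(F)/G(\cO)$ with $\ad(g)^{-1}\gamma \in \fg(\cO)$, where $\gamma = x_\bullet(a)$ is the companion matrix of $a$ acting on $V = B_a \otimes_\cO F = F[x]/(f_a)$ via multiplication by $x$. On the other side we have the set of $\cO$-lattices $\cV \subset V$ that are stable under multiplication by $x$, i.e. $B_a$-submodules of $V$ that are free of rank $n$ over $\cO$. The key point is that a coset $gG(\cO)$ is the same thing as the $\cO$-lattice $\cV = g\cdot (\cO^n) = g\cdot B_a^{\circ}$ inside $V$, where $B_a^\circ = B_a$ is the standard lattice spanned by $1, x, \dots, x^{n-1}$; this is the classical dictionary between $\GL_n(F)/\GL_n(\cO)$ and lattices in $V$.

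First I would spell out this dictionary: sending $gG(\cO)$ to the lattice $\cV_g := g \cdot B_a^\circ$ is a bijection between $G(F)/G(\cO)$ and the set of all $\cO$-lattices in $V$. Next I would translate the condition $\ad(g)^{-1}\gamma \in \fg(\cO)$ into a statement about $\cV_g$. Since $\gamma$ is multiplication by $x$ on $V$, the operator $\ad(g)^{-1}\gamma = g^{-1}\gamma g$ is, up to the identification of $V$ with $F^n$ via $g$, again multiplication by $x$ but viewed through the basis $g$; concretely $g^{-1}\gamma g$ has entries in $\cO$ if and only if $x \cdot \cV_g \subseteq \cV_g$, i.e. the lattice $\cV_g$ is stable under multiplication by $x$. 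But a lattice stable under multiplication by $x$ is precisely a lattice that is a sub-$\cO[x]$-module of $V$, and since $x$ satisfies $f_a(x) = 0$ on $V$, this is the same as being a $B_a = \cO[x]/(f_a)$-submodule. Conversely, any $B_a$-submodule $\cV$ of $V$ which is an $\cO$-lattice arises this way: choose an $\cO$-basis of $\cV$, let $g \in \GL_n(F)$ be the change-of-basis matrix from the standard basis of $V$, and then $\ad(g)^{-1}\gamma$ is multiplication by $x$ in the basis of $\cV$, hence has $\cO$-entries.

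The remaining routine checks are that $\cV_g$ really is a lattice (it is, being the $\cO$-span of a basis of $V$), that the construction is independent of the choice of representative $g$ and of the chosen $\cO$-basis of $\cV$ (changing either composes $g$ on the right with an element of $G(\cO)$, which does not change the coset), and that $\cV$ being a $B_a$-module automatically forces it to be free of rank $n$ over $\cO$ once it is a lattice (any $\cO$-lattice in an $n$-dimensional $F$-vector space is free of rank $n$, $\cO$ being a PID). One should also note that the regular semisimple hypothesis $a \in \fc^{\mathrm{rs}}(F)$, equivalently $B_a \otimes_\cO F$ étale over $F$, is what guarantees $V$ is the regular representation of the étale $F$-algebra $B_a \otimes F$ and hence that $\cM_a$ is finite-dimensional, but it is not actually needed for the set-theoretic bijection itself.

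I expect the main obstacle to be purely expository rather than mathematical: one must be careful that the identification $\cM_a \cong \cM_\gamma$ (via the companion section in place of the Kostant section) is compatible with the lattice dictionary, so that the operator whose integrality we are testing is genuinely "multiplication by $x$" on $V$ and not some twist of it. Once the companion matrix is recognized as the multiplication-by-$x$ operator on $B_a$ — which is exactly the content of Section \ref{sec: GLn Case} — this compatibility is immediate, and the theorem follows.
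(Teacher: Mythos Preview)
Your argument is correct and is exactly the standard one: the bijection between $\GL_n(F)/\GL_n(\cO)$ and lattices in $V$, together with the observation that $\ad(g)^{-1}\gamma\in\gl_n(\cO)$ is equivalent to $x\cdot\cV_g\subseteq\cV_g$, i.e.\ to $\cV_g$ being a $B_a$-module. The paper does not actually supply its own proof of this theorem; it simply cites Section~2 of \cite{yun} for an exposition, and what you have written is precisely that exposition.
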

See for example, Section 2 of \cite{yun} for an exposition.

For computational purposes, it is desirable to have a lattice description of affine Springer fibers similar to Theorem \ref{thm:lattice desription} for classical groups, which is as simple as in the linear case. This is possible due to the construction of the companion matrix, and in fact, this was our original motivation. 

In the cases we have investigated in the paper, i.e., symplectic, special orthogonal, and $G_2$, we have constructed a finite, flat spectral cover $\fs=\Spec(B)$ of the invariant theoretic quotient $\fc=\Spec(A)$ which is étale over the regular semisimple locus of $\fc$. The degree $d=\deg(B/A)$ is the degree of the standard representation which is $2n$ for $\Sp_{2n}$, $2n+1$ for $\SO_{2n+1}$, $2n$ for $\SO_{2n}$ and $7$ for $G_2$. In the case $\SO(2n)$, we must consider the normalization $\tilde B$ of $B$. In each of these cases, we constructed a form $\omega$, which is 
\begin{itemize}
	\item a non-degenerate symplectic form $\omega:B\times B\to A$ satisfying $\omega(xb_1,b_2)+\omega(b_1,xb_2)=0$ for $\Sp_{2n}$
	\item a non-degenerate symmetric form $\omega:B\times B\to A$ satisfying $\omega(xb_1,b_2)+\omega(b_1,xb_2)=0$ for $\SO_{2n+1}$
	\item a non-degenerate symmetric form $\omega:\tilde B\times \tilde B\to A$ satisfying $\omega(xb_1,b_2)+\omega(b_1,xb_2)=0$ for $\SO_{2n}$
	\item a non-degenerate alternating form $\omega:B\times B \times B\to A$ satisfying $$\omega(xb_1,b_2,b_3)+\omega(b_1,xb_2,b_3)+\omega(b_1,b_2,xb_3)=0$$ for $G_2$
\end{itemize}
We also constructed a trivialization of the determinant $\bigwedge^d_A B=A$ in all these cases. 

For every $a\in \fc(\cO)\cap \fc^{rs}(F)$, we construct a finite flat $\cO$-algebra $B_a$ by base change from the spectral cover $\fs\to \fc$. Because $a\in \fc^{rs}(F)$, the generic fiber $V_a=B_a\otimes_\cO F$ is a finite étale $F$-algebra of degree $d$. By pulling back $\omega$, we get a form $\omega_a$ which is a non-degenerate alternating $F$-bilinear form on $V_a$ in the symplectic case, a non-degenerate symmetric $F$-bilinear form on $V_a$ in the orthogonal case, and a non-degenerate alternating $F$-trilinear form on $V_a$ in the $G_2$ case. Moreover, it extends to a non-degenerate form valued in $\cO$ on $B_a$ in $\Sp_{2n}$, $\SO_{2n+1}$ and $G_2$ cases and on $\tilde B_a$ in the $\SO_{2n}$-case. 

\begin{theorem}
	The set of $k$-points of the affine Springer fiber $\cM_a$ is the set of $\cO$-lattices $\cV$ of $V_a$, which are $B_a$-modules,  such that the restriction of $\omega_a$ has value in $\cO$ and such that $\deg(\cV:B_a)=0$ in $\Sp_{2n}, \SO_{2n+1}, G_2$ cases and $\deg(\cV:\tilde B_a)=0$ in the $\SO_{2n}$ case. 
\end{theorem}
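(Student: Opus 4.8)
The plan is to transport the $\GL_n$ lattice description of Theorem~\ref{thm:lattice desription} across the companion section constructed above. Write $d$ for the degree of the standard representation and, for $a\in\fc(\cO)\cap\fc^{rs}(F)$, let $\cV_a$ be the $\cO$-lattice obtained by base change from the spectral cover: $\cV_a=B_a$ for $\Sp_{2n}$, $\SO_{2n+1}$, $G_2$ and $\cV_a=\wt B_a$ for $\SO_{2n}$. Base change along $a$ turns the companion data into the $\cO$-package consisting of the free rank-$d$ module $\cV_a$, the endomorphism $[x]$ (multiplication by $x$), the form $\omega_a$, and the trivialization $\bigwedge^d_\cO\cV_a\cong\cO$; setting $\gamma=[x]$, this exhibits $\gamma$ as an element of $\fg(\cO)$ relative to the $G$-structure on $\cV_a$, regular semisimple over $F$ with invariant $a$, and (by the same reduction as in \cite{ngo} recalled above) $\cM_a$ is the affine Springer fiber $\cM_\gamma$. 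So it is enough to describe $\cM_\gamma(k)$, and I will work throughout inside $V_a=\cV_a\otimes_\cO F$ with $\gamma$ equal to multiplication by $x$.

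First I would unwind the lattice dictionary. The standard representation identifies $G(F)/G(\cO)$ with the set of lattices $\cW\subset V_a$ of the form $\cW=g\cdot\cV_a$, $g\in G(F)$; equivalently, with the lattices on which $\omega_a$ (in the $G_2$ case, $\rho_a$) restricts to an $\cO$-valued form that is fiberwise non-degenerate, and which satisfy $\deg(\cW:\cV_a)=0$ (the last because $G\subset\SL$ forces $\val\det g=0$). Under this dictionary the defining condition $\ad(g)^{-1}\gamma\in\fg(\cO)$ becomes $\gamma\cdot\cW\subseteq\cW$; since $\cO[\gamma]=B_a$ inside $\End_F(V_a)$, this is exactly the requirement that $\cW$ be a $B_a$-submodule of $V_a$. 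In the $\SO_{2n}$ case $\gamma$ still only generates $B_a$ over $\cO$ although the standard lattice is $\wt B_a$, which is why the module condition is phrased over $B_a$ while the degree is measured against $\wt B_a$.

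The conceptual heart is then the following claim: if $\cW\subset V_a$ is $B_a$-stable and $\omega_a(\cW,\cW)\subseteq\cO$ (in the $G_2$ case, $\rho_a$ is $\cO$-valued on $\cW$), then $\cW$ is self-dual for $\omega_a$ if and only if $\deg(\cW:\cV_a)=0$. I would prove this by passing to the dual lattice $\cW^\vee=\{v\in V_a:\omega_a(v,\cW)\subseteq\cO\}$, where for $G_2$ one works with the symmetric form $\nu_a=-2^4 3^2\,\omega_a$ supplied by Proposition~\ref{prop:G2form is nondegenerate}: the form $\omega_a$ (resp. $\nu_a$) gives an isomorphism $V_a\cong V_a^*$, so $L\mapsto L^\vee$ reverses inclusions and negates relative degree, and the anti-self-adjointness of $[x]$ for $\omega_a$ makes $\cW^\vee$ again $B_a$-stable. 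The hypothesis gives $\cW\subseteq\cW^\vee$, and unimodularity of $\cV_a$ gives $\deg(\cW^\vee:\cV_a)=-\deg(\cW:\cV_a)$, whence $\deg(\cW^\vee:\cW)=-2\deg(\cW:\cV_a)$; since $\cW\subseteq\cW^\vee$ this is $\geq 0$, so $\deg(\cW:\cV_a)\leq 0$ with equality precisely when $\cW=\cW^\vee$. Combined with the dictionary, this shows that the conditions in the statement ($B_a$-stable, $\omega_a$ valued in $\cO$, $\deg(\cW:\cV_a)=0$) cut out exactly the $\omega_a$-self-dual $B_a$-stable lattices of the correct determinant.

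The step I expect to be the real obstacle is the remaining surjectivity: that every such lattice is actually $g\cdot\cV_a$ for some $g$ in $G(F)$ itself, rather than in a larger similitude, orthogonal, or linear group. For $\Sp_{2n}$ this is the classical transitivity of the symplectic group on unimodular lattices. For $\SO_{2n+1}$ and $\SO_{2n}$ I would first move $\cW$ onto $\cV_a$ by an element of the full orthogonal group $O(V_a)(F)$ — using that $(V_a,\omega_a)$ is split and that over the complete discrete valuation ring $\cO$ the orthogonal group is transitive on unimodular lattices of a given discriminant — and then correct the determinant by composing with the reflection in a vector of $\cV_a$ of unit norm, which exists because $p>2$ forces the residual quadratic form to represent a unit. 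For $G_2$ I would use Proposition~\ref{prop:G2form is nondegenerate} to work with the proportional symmetric form $\nu_a$ throughout, so that $\cW$ is in particular $\nu_a$-self-dual, and then argue from the explicit structure of the cover that a $B_a$-stable lattice on which $\rho_a$ is $\cO$-valued and non-degenerate lies in the $G_2(F)$-orbit of $\cV_a$; here there are no reflections or outer automorphisms of the $3$-form to dispose of, so the rigidity of $G_2$ works in our favour. Assembling the dictionary of the first two paragraphs, the claim of the third, and these orbit statements yields the asserted description of $\cM_a(k)$.
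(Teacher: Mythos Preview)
Your approach matches the paper's: reduce to the $\GL_n$ lattice description and identify the $G$-lattices as those on which the companion form $\omega_a$ is non-degenerate and $\cO$-valued. The paper's own proof is a single sentence to this effect, so everything you wrote---the self-duality argument that $\cO$-valuedness plus $\deg=0$ forces unimodularity, and the transitivity of $G(F)$ on such lattices---is detail you are supplying beyond what the paper records.
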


The proof of this result follows immediately from the proof of Theorem \ref{thm:lattice desription}, as lattices preserved by the nondegenerate form $\omega_a$ constructed above are exactly those for which there is a reduction of structure to the classical group $G$.

\section{Application to the Hitchin fibration}

Let $X$ be a smooth, projective curve over an algebraically closed field $k$ and let $G$ be a reductive group over $k$ with Lie algebra $\fg$. Fix a line bundle $L$ on $X$ such that either $\deg(L)>2g-2$ or $L = K$ is the canonical bundle. Denote by $\cM$ the moduli stack of Higgs bundles on $X$, whose $k$ points are given by the set of Higgs bundles
$$\cM(k) = \{(E,\phi)\colon E\to X\text{ is a $G$ bundle, }\phi\in \Gamma(X,{\rm ad}(E)\otimes L)\}$$
More succinctly, $\cM$ is the mapping stack $\cM = {\rm Maps}(X,[\fg_L/G])$ where $\fg_L = \fg\wedge^{\bG_m}L$ is the twisted bundle of Lie algebras on $X$. 

Recall that under mild hypotheses on the characteristic of $k$ (${\rm char}(k)>2$ for $G = \SO_n$ and $\Sp_{2n}$ and ${\rm char}(k)>3$ for $G = G_2$), the Chevalley isomorphism shows 
\[
\fg\git G\simeq \ft\git W\simeq \bA^n
\]
is an affine space with $\bG_m$ action by weights $d_1,\dots, d_n$. Let 
\[
\cA = {\rm Maps}(X,\fg_L\git G) \simeq \otimes_{i=1}^n \Gamma(X,L^{\otimes d_i})
\]
Hitchin, in \cite{hitchin}, studied the space $\cM$, with appropriate stability conditions imposed, through the fibration that now bears his name:
\[
h\colon \cM\to \cA, \quad (E,\phi)\mapsto {\rm char}(\phi)
\]
where ${\rm char}(\phi)$ is given by composition with the quotient map $[\fg/G]\to \fg\git G$. Let $\cM_a$ denote the fiber of the map $h$ over a point $a\in \cA$. In the case that $G = \GL_n$, $d_i = i$ and ${\rm char}(\phi) = \sum_i a_i x^i$ is the characteristic polynomial of $\phi$, whose coefficients are then sections $a_i\in \Gamma(X,L^{\otimes i})$. 

The companion section $[x]:\fg\git G\to [\fg/G]$ can be used to construct an explicit section to the Hitchin map after extracting a square root of $L$. This section in many cases is almost the same as the section constructed by Hitchin \cite{hitchin} and \cite{hitchin-G2}, but can be different from the section constructed in \cite{ngo} which is based on the Kostant section. In every case, the Higgs bundle constructed from the companion section will be built out of the structural sheaf of the spectral curve. Note that the following assumes basic $\bG_m$ equivariance properties of the relevant forms. For example, in the case of $G = \Sp_{2n}$, we have constructed a canonical alternating form $\omega\colon \wedge^2_A B\to A$ which satisfies $\omega(\lambda \xi) = \lambda^{1-2n}\omega(\xi)$ for any $\lambda \in \bG_m$ and $\xi\in \wedge^2_AB$.

In \cite{ngo}, it is shown that over a large open subset of $\cA$, there is a close connection, depending on a choice of section, between Hitchin fibers and affine Springer fibers given by the Product Formula. More precisely, let $\mathfrak{D} = \bigcup_{\alpha} \ft^{s_\alpha}\git W$ be the divisor consisting of the union of the image of each root hyperplane in $\ft$; in particular, the complement of $\mathfrak{D}$ in $\fc$ is the regular, semisimple locus $\fc^{rs}$. Fix $a\in \cA$ such that $a(X)\not\subset \mathfrak{D}$, and let $U\subset X$ be the preimage of $\fc^{rs}$ in $X$. Given trivialization of the line bundle $D$ on some neighborhood of each point $v\in X\setminus U$, we have a map
\[
\prod_{v\in X\setminus U}\cM_{x,a}\to \cM_a.
\]
from the product of affine Springer fibers at the points $x\in X\setminus U$ to the Hitchin fiber, which consists of gluing with the companion section restricted to $U$. It it induces a universal homeomorphism
\[
\prod_{\gamma\in X\setminus U}\cM_{\gamma,a}\wedge^{\prod_\gamma \cP_{\gamma}(J_a)}\cP_a\to \cM_a.
\]
The groups $\cP_\gamma(J_a)$ and $\cP_a$ are discussed in detail in \cite{ngo}; we will not describe them here. This is proved in \cite{ngo} under the assumption that $\pi_0(\cP_a)$ is finite, and  by Bouthier and Cesnavicius in \cite{bouthier} under the only assumption that $a(X)\not\subset \mathfrak{D}$.

As Section \ref{sec: affine springer} describes the affine springer fibers $\cM_{\gamma,a}$, the product formula above gives an explicit description of Hitchin fibers in the case that $a(X)\not\subset \mathfrak{D}$. Namely, we have the following descriptions for Hitchin fibers under this assumption.
\begin{itemize}
	\item for $G=\GL_n$, and $a\in \cA$ we have a spectral cover $p_a:Y_a \to X$ embedded in the total space $|L|$ of $L$. We then associate with $a$ the Higgs bundle $E_a=p_{a*} \cO_{Y_a}$ and the Higgs fields $\phi:E_a\to E_a\otimes L$ given by the structure of $\cO_{Y_a}$ as an $\cO_{|L|}$-module. 
	\item for $G=\Sp_{2n}$, and $a\in \cA$, we have a spectral cover $p_a:Y_a \to X$ embedded in the total space $|L|$ of $L$. If $E_a=p_{a*} \cO_{Y_a}$ then we have a canonical symplectic form $\wedge^2 E_a \to L^{\otimes(1-2n)}$. If $L'$ is a square root of $L$ then $E'_a = E_a\otimes {L'}^{\otimes 1-2n}$ will be equipped with a canonical symplectic form with value in $\cO_X$ and also equipped with a Higgs fields derived from the the structure of $\cO_{Y_a}$ as a $\cO_{|L|}$-module.
	\item for $G=\SO_{2n+1}$, and $a\in \cA$, we have a spectral cover $p_a:Y_a \to X$ embedded in the total space $|L|$ of $L$. If $E_a=p_{a*} \cO_{Y_a}$ then we have a canonical non-degenerate symmetric form $S^2 E_a \to L^{\otimes(-2n)}$ so that the vector bundle $E'_a=E_a \otimes L^{\otimes n}$ affords a canonical no-degenerate symmetric form with value in $\cO_X$, and also equipped with a Higgs fields derived from the the structure of $\cO_{Y_a}$ as a $\cO_{|L|}$-module. It also affords a trivialization of the determinant depending on the choice of a square root of $L$. 
	\item for $G=\SO_{2n}$, and $a\in \cA$, we have a spectral cover $p_a:Y_a \to X$ embedded in the total space $|L|$ of $L$. Using the normalization of the universal spectral cover, we obtain a partial normalization $\tilde Y_a$ of $Y_a$. If $E_a=p_{a*} \cO_{\tilde Y_a}$ then we have a canonical non-degenerate symmetric form $S^2 E_a \to L^{\otimes(2-2n)}$ so that the vector bundle $E'_a=E_a \otimes L^{\otimes 1- n}$ affords a canonical non-degenerate symmetric form with values in $\cO_X$, and also equipped with a Higgs fields derived from the the structure of $\cO_{Y_a}$ as a $\cO_{|L|}$-module. It also affords a canonical trivialization of the determinant depending on the choice of a square root of $L$. 
	\item for $G=G_2$, and $a\in \cA$, we have a spectral cover $p_a:Y_a \to X$ embedded in the total space $|L|$ of $L$. If $E_a=p_{a*} \cO_{\tilde Y_a}$ then we have a canonical non-degenerate 3-form $\wedge^3 E_a \to L^{-9}$ so that the vector bundle $E'_a=E_a \otimes L^{\otimes 3}$ affords a canonical non-degenerate 3-form with value in $\cO_X$, and also equipped with a Higgs fields derived from the the structure of $\cO_{Y_a}$ as a $\cO_{|L|}$-module. It also affords a canonical trivialization of the determinant depending on the choice of a square root of $L$. 
\end{itemize}

\appendix

\section{Computer algebra code and $G_2$ computations}
\label{appendix: Macaulay code}

In this appendix, we give the computer code used to compute the 3-form $\rho$ in Section \ref{sec: G2 case}.

\subsection{Construction of $\rho$}

To construct $\rho$, we will use the connection between nondegenerate alternating 3-forms and cross products. Let $V$ be a vector space with a nondegenerate, symmetric bilinear form $\nu$.
\begin{definition}
	\label{cross product}
	A \emph{cross product} on $(V,\nu)$ is a bilinear map
	\[
	c\colon V\otimes V\to V
	\]
	satisfying the following three properties for all $v_1,v_2\in V$:
	\begin{enumerate}
		\item (Skew symmetry) $c(v_1,v_2) = -c(v_2,v_1)$;
		\item (Orthogonality) $\nu(c(v_1,v_2),v_1) = 0$;
		\item (Normalization) $\displaystyle \nu\big(c(v_1,v_2),c(v_1,v_2)\big) = \det\begin{pmatrix}
			\nu(v_1,v_1) & \nu(v_1,v_2) \\ \nu(v_1,v_2) & \nu(v_2,v_2)
		\end{pmatrix}$
	\end{enumerate}
\end{definition}

The data of a cross product on $(V,\nu)$ is equivalent to the data of a nondegenerate 3-form on $V$ whose associated symmetric bilinear form (see equation \eqref{eqn: associated symmetric form}) is a scalar multiple of $\nu$. Indeed, to a cross product $c$, one associates the 3-form
\begin{equation}
	\label{eqn: 3form to cross product}
	\rho(v_1,v_2,v_3) = \nu\big( c(v_1,v_2),v_3 \big)
\end{equation}
while for any non-degenerate 3-form $\rho$, there is a unique cross product $c$ satisfying equation \eqref{eqn: 3form to cross product}. 

Now, consider the free, rank 7 $A$-module $B$ as in Section \ref{sec: G2 case} equipped with the symmetric, nondegenerate form $\omega$ defined by the formula
\[
\omega(g_1,g_2) = {\rm tr}_{B/A}\left( \frac{g_1\tau(g_2)}{f'} \right)
\]
as in the $\SO_{7}$ case. Here, $\tau(x) = -x$ is the natural involution on $B$, and the trace is taken after inverting $f'$ in $A$. To construct a 3-form on $B$ which is nondegenerate over every $k$ point of $A$, it suffices to construct a cross product
\[
c\colon B\otimes_AB\to B
\]
for $(B,\omega)$. Moreover, the equation
\[
\rho(xg_1,g_2,g_3)+\rho(g_1,xg_2,g_3)+\rho(g_1,g_2,xg_3) = 0
\]
is equivalent to the condition
\begin{equation}
	\label{eqn: cross product compatibility with x}
	c(xg_1,g_2)+c(g_1,xg_2) = xc(g_1,g_2).
\end{equation}
To simplify computations further, we note that any form $c\colon B\otimes_A B\to B$ satisfying the conditions of Definition \ref{cross product} and equation \eqref{eqn: cross product compatibility with x} can be recovered from its trace:
\[
tc\colon B\otimes_A B\to A, \quad (g_1,g_2)\mapsto {\rm tr}_{B/A}\big( c(g_1,g_2) \big)
\]
Indeed, if we express
\[
c(x^i,x^j) = \sum_{l=0}^6 c_{i,j}^{(l)} x^l
\]
then $c_{i,j}^{(6)} = tc(x^i,x^j)$ and
\[
tr_{B/A}\big(x^l c(x^i,x^j)\big) = \sum_{r=0}^l\binom{l}{r}tc(x^{i+r},x^{j+l-r})
\]
can be expressed in terms of $c_{i,j}^{(m)}$ for $6-l\leq m\leq 6$. This allows us to recover the coefficients $c_{i,j}^{(l)}$ by downward induction on $l$.

This idea is implemented in the following Macaulay2 code. There is a one-dimensional solution space, which is specialized at a particular point to give the form stated in equation \eqref{eqn:G2form}. Note that it is immediate from the computer calculation that the form $\rho$ is valued in $B$ and satisfies the conclusion of Proposition \ref{prop:G2form is compatible with x}.

\begin{verbatim}
	S=QQ[e,q];
	F=frac(S);
	R=F[p_(0,0) .. p_(6,6)]; -- ring with p_(i,j)=tc(x^i,x^j), 
	0\leq i,j\leq 6
	
	-- The following three commands define tc(x^i,x^j) for i or j between 
	7 and 12 using the relation x^7-e*x^5+e^4/4*x^3+q*x=0.
	for l from 0 to 5 do [for k from 0 to 6 do p_(k,7+l)=e*p_(k,5+l)-
	(1/4)*e^2*p_(k,3+l)-q*p_(k,1+l)];
	for l from 0 to 5 do [for k from 0 to 6 do p_(7+l,k)=e*p_(5+l,k)-
	(1/4)*e^2*p_(3+l,k)-q*p_(1+l,k)];
	for l from 0 to 5 do [for k from 7 to 12 do p_(k,7+l)=e*p_(k,5+l)-
	(1/4)*e^2*p_(k,3+l)-q*p_(k,1+l)];  
	
	-- I encodes orthogonality:
	I = ideal(flatten for a from 0 to 6 list for k from 0 to 6 list 
	sum(0..k,j->binomial(k,j)*p_(k+j,a+k-j)));
	
	-- J encodes skew symmetry:
	J = ideal( flatten for a from 0 to 6 list for b from 0 to 6 list 
	p_(a,b)+p_(b,a) );
	
	-- The following encodes the normalization condition:
	B=R[x]/(x^7-e*x^5+(1/4)*e^2*x^3+q*x);
	-- determinant of norms of x^i,x^j:
	f = (i,j) -> coefficient(x^6,(-1)^i*x^(2*i))*coefficient(x^6,(-1)^j*
	x^(2*j))-coefficient(x^6,(-1)^j*x^(i+j))*coefficient(x^6,(-1)^j*
	x^(i+j));
	-- norm of c(x^i,x^j):
	g = (i,j) -> coefficient(x^6, (p_(i,j)*(x^6-e*x^4+(1/4)*e^2*x^2+q)+
	sum(0..1,l->binomial(1,l)*p_(i+l,j+1-l))*(x^5-e*x^3+(1/4)*e^2*x)+
	sum(0..2,l->binomial(2,l)*p_(i+l,j+2-l))*(x^4-e*x^2+(1/4)*e^2)+
	sum(0..3,l->binomial(3,l)*p_(i+l,j+3-l))*(x^3-e*x)+sum(0..4,l->
	binomial(4,l)*p_(i+l,j+4-l))*(x^2-e)+sum(0..5,l->binomial(5,l)*
	p_(i+l,j+5-l))*(x)+sum(0..6,l->binomial(6,l)*p_(i+l,j+6-l)))
	*(p_(i,j)*((-x)^6-e*(-x)^4+(1/4)*e^2*(-x)^2+q)+sum(0..1,l->
	binomial(1,l)*p_(i+l,j+1-l))*((-x)^5-e*(-x)^3+(1/4)*e^2*(-x))+
	sum(0..2,l->binomial(2,l)*p_(i+l,j+2-l))*((-x)^4-e*(-x)^2+(1/4)*
	e^2)+sum(0..3,l->binomial(3,l)*p_(i+l,j+3-l))*((-x)^3-e*(-x))+
	sum(0..4,l->binomial(4,l)*p_(i+l,j+4-l))*((-x)^2-e)+sum(0..5,l->
	binomial(5,l)*p_(i+l,j+5-l))*(-x)+sum(0..6,l->binomial(6,l)*
	p_(i+l,j+6-l))) );
	-- K encodes the normalization condition:
	K = ideal(flatten for i from 0 to 6 list for j from 0 to 6 list 
	f(i,j)-g(i,j)); 
	
	Q=R/(I+J+K); -- imposing the relations on our ring of variables
	Q2=Q/ideal(p_(6,3)-1,p_(6,4),p_(6,5)-5*e/2); -- specializes to our 
	particular form rho
	
	-- Computation of c from tc:
	P=Q2[x]/(x^7-e*x^5+e^2/4*x^3+q);
	C=table(for k from 0 to 6 list k, for k from 0 to 6 list k, (i,j) -> 
	(p_(i,j)*(x^6-e*x^4+(1/4)*e^2*x^2+q)+sum(0..1,l->binomial(1,l)*
	p_(i+l,j+1-l))*(x^5-e*x^3+(1/4)*e^2*x)+sum(0..2,l->binomial(2,l)*
	p_(i+l,j+2-l))*(x^4-e*x^2+(1/4)*e^2)+sum(0..3,l->binomial(3,l)*
	p_(i+l,j+3-l))*(x^3-e*x)+sum(0..4,l->binomial(4,l)*p_(i+l,j+4-l))*
	(x^2-e)+sum(0..5,l->binomial(5,l)*p_(i+l,j+5-l))*(x)+sum(0..6,l->
	binomial(6,l)*p_(i+l,j+6-l)))); 
	-- This is the matrix for c with respect to the basis x^i, i=0,..,6
	netList C -- displays C
\end{verbatim}

\subsection{Nondegeneracy of $\rho$}

Let $\rho$ be the form computed in the previous section, stated explicitly in equation \eqref{eqn:G2form}. Note that since we specialized to a particular form in the previous section, it is not yet clear that this form is nondegenerate. For this, we produce the following code in Macaulay2 to explicitly compute the associated bilinear form is as in Proposition \ref{prop:G2form is nondegenerate}. The following uses some basic operations on permutations from the package SpechtModule authored by Jonathan Niño in Macaulay2.

\begin{verbatim}
	T=permutations {0,1,2,3,4,5,6};
	n = (v,w) -> sum(0..7!-1, k-> permutationSign(T_k)*coefficient(x^6,
	v*(C_((T_k)_0))_((T_k)_1))*coefficient(x^6,w*(C_((T_k)_2))_((T_k)_3))
	*coefficient(x^6,(-x)^((T_k)_4)*(C_((T_k)_5))_((T_k)_6))  );
	S=table(for k from 0 to 6 list k, for k from 0 to 6 list k, (i,j) -> 
	n((-x)^i,(-x)^j);
	netList S
\end{verbatim}

\bibliographystyle{amsalpha}
\bibliography{companion/example.bib}
\end{document}